\documentclass[10pt,double,reqno]{amsart}

\usepackage{amsmath,amssymb,amscd,color}

\usepackage{graphicx}
\usepackage{epsfig}
\usepackage{amsthm}
%\usepackage{color}
%\definecolor{r}{rgb}{0.9,0.3,0.1}
%\definecolor{b}{rgb}{0.1,0.3,0.9}

%\usepackage{amsmath}
%\usepackage{amssymb}
%\usepackage{amsthm}
\usepackage{enumerate}
\usepackage{amsbsy}
\usepackage{amsfonts}
\topmargin -0.25in \textheight 8.5in \flushbottom
\setlength{\textwidth}{6.in} % 8.5in - 1in margin on each side = 6.5in
\setlength{\oddsidemargin}{.25in} % gives 1+.25 in left margin, page 163
\setlength{\evensidemargin}{.25in}% gives 1+.25 in left margin, page 163
\newtheorem{theo}{Theorem}[section]
\newtheorem{defin}[theo]{Definition}
\newtheorem{prop}[theo]{Proposition}

\newtheorem{lemm}[theo]{Lemma}
\newtheorem{rem}[theo]{Remark}

\newcommand{\al}{\alpha}

\newcommand{\Ga}{\Gamma}

\newcommand{\om}{\omega}
\newcommand{\Om}{\Omega}

\newcommand{\De}{\Delta}

\newcommand{\pa}{\partial}

\newcommand{\R}{{\mathbb R}^n}

\newcommand{\Rn}{{\mathbb R}^{n-1}}

\newcommand{\na}{\nabla}

\begin{document}
\baselineskip=18pt

\title[]{Initial-Boundary Value Problem of the   Navier-Stokes system in the  half space}

\
\author{Tongkeun Chang}
\address{Department of Mathematics, Yonsei University \\
Seoul, 136-701, South Korea}
\email{chang7357@yonsei.ac.kr}

\author{Bum Ja Jin}
\address{Department of Mathematics, Mokpo National University, Muan-gun 534-729,  South Korea }
\email{bumjajin@hanmail.net}

\thanks{}

\begin{abstract}
In this  paper, we study the initial-boundary value problem of the Navier-Stokes system in the half space.
We prove the  unique solvability of the weak solution  on some  short time interval $(0,T)$ with the velocity in $ C^{\al,\frac{\al}{2}}(\R_+\times (0,T))$, $0<\al<1$,  when the given initial data is in $ C^\al(\R_+)$ and the given  boundary data is in $ C^{\al,\frac{\al}{2}}(\Rn\times (0,T))$. Our result generalizes the result in \cite{sol1}  considering nonhomogeneous Dirichlet boundary data.

\noindent
 2000  {\em Mathematics Subject Classification:}  primary 35K61, secondary 76D07. \\

\noindent {\it Keywords and phrases: Stokes System, Navier-Stokes
equations, Initial-boundary value problem, anisotropic Besov space, Half space. }

\end{abstract}

\maketitle

\section{\bf Introduction}
\setcounter{equation}{0}

Let $\R_+ = \{ x \in \R \, | \, x_n > 0 \}$, $n\geq 2$ and $0 < T < \infty$.
In this paper, we  consider the following initial-boundary value problem of the
Navier-Stokes system  in $\R_+ \times (0,T)$:
\begin{align}\label{maineq2}
\begin{array}{l}\vspace{2mm}
u_t - \De u + \na p =-\mbox{div}\,(u\otimes u), \qquad \mbox{div} \, u =0, \mbox{ in }
 \R_+\times (0,T),\\
\hspace{25mm}u|_{t=0}= h, \qquad  u|_{x_n =0} = g,
\end{array}
\end{align}
where
 $u=(u_1,\cdots, u_n)$ and  $p$ are unknown velocity and the pressure, respectively, and
   $     g=(g_1,\cdots, g_n), \, h=(h_1,\cdots, h_n)$ are the  given data.

In this paper, we show the unique solvability of the Navier-Stokes system \eqref{maineq2} with initial and boundary  data in anisotropic Besov spaces.
The following  states the main result of this paper.
\begin{theo}
\label{thm3}
 For $0<\al<1$ and $T>0$, let $h\in C^{\alpha}(\R_+),
    \ g\in C^{\alpha,\frac{\al}2   }(\Rn\times (0,T)).$
   We assume that
\begin{equation}
 \label{hp2}
 g|_{t=0}= h|_{x_n=0},\  \mbox{div }h=0,\quad R'g_n\in L^\infty(\Rn\times (0,T)),\quad R'{h}_n \in L^\infty(  \R_+  )
\end{equation}
where,  $R'=(R_1',\cdots,R_{n-1}')$ is the $n-1$ dimensional Riesz operator.
 We also assume that there is  $\tilde{h}\in C^\al(\R)$ an extension of $h$ to $\R$  satisfying that $\mbox{div}\, \tilde{h}=0$, $
  R'\tilde{h}_n \in L^\infty(  \R  ).$
 Then, there is  $T^*$ ($0<T*<T$) such that    the Navier-Stokes system
\eqref{maineq2}  has a  weak solution   $u\in C^{\al,\frac{\al}2 }({\mathbb R}^n_+\times (0,T^*))$  with appropriate distribution $p$.
Moreover, $u$ is a unique in the class $C^{\al,\frac{\al}2 }({\mathbb R}^n_+\times (0,T^*))$.
\end{theo}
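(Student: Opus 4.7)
The strategy is the classical linearization plus contraction scheme adapted to the half space. I would first isolate the data by writing $u = v + w$, where $v$ solves the \emph{linear} Stokes initial-boundary value problem
\[
v_t - \Delta v + \nabla q = 0,\quad \mathrm{div}\, v = 0,\quad v|_{t=0}=h,\quad v|_{x_n=0}=g,
\]
and $w$ is the new unknown solving
\[
w_t - \Delta w + \nabla \pi = -\mathrm{div}\bigl((v+w)\otimes(v+w)\bigr),\quad \mathrm{div}\, w = 0,\quad w|_{t=0}=0,\quad w|_{x_n=0}=0.
\]
Then the nonlinear map $\Phi: w \mapsto $ (solution of the above Stokes problem with source $-\mathrm{div}((v+w)\otimes(v+w))$) is set up, and a fixed point is sought in a small ball of $C^{\alpha,\alpha/2}(\R_+\times(0,T^*))$ for $T^*$ small enough.

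The linear step is to invoke the solvability theory for the Stokes system with inhomogeneous Hölder data. Using the extension $\ti h$ to $\R$ (which reduces the initial condition to one on full space, solvable via the heat semigroup preserving the divergence-free condition) and then correcting the trace on $\{x_n = 0\}$ by a boundary-data Stokes problem, one obtains the bound
\[
\|v\|_{C^{\alpha,\alpha/2}(\R_+\times(0,T))} \le C\bigl(\|h\|_{C^\alpha(\R_+)} + \|g\|_{C^{\alpha,\alpha/2}(\Rn\times(0,T))} + \|R'g_n\|_{L^\infty} + \|R'\ti h_n\|_{L^\infty}\bigr).
\]
The role of the $R'$-conditions is to ensure that the normal component of the boundary data is compatible with the pressure trace; without them the boundary Stokes solution would not live in $C^{\alpha,\alpha/2}$. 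The compatibility $g|_{t=0} = h|_{x_n=0}$ is what makes the gluing continuous across $t=0$.

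For the nonlinear step I would prove an estimate for the Stokes problem with a divergence-form source and zero data of the form
\[
\|w\|_{C^{\alpha,\alpha/2}(\R_+\times(0,T^*))} \le C\,(T^*)^{\gamma}\,\|F\|_{C^{\alpha,\alpha/2}(\R_+\times(0,T^*))}
\]
for some $\gamma > 0$, where $F = (v+w)\otimes(v+w)$. Since $C^{\alpha,\alpha/2}$ is a Banach algebra of bounded functions, $\|F\|_{C^{\alpha,\alpha/2}} \le C(\|v\|_{C^{\alpha,\alpha/2}}+\|w\|_{C^{\alpha,\alpha/2}})^2$. Choosing $T^*$ so small that $C(T^*)^\gamma(\|v\|_{C^{\alpha,\alpha/2}}+R)^2 \le R$ and the analogous Lipschitz smallness hold, $\Phi$ contracts on the ball $\{\|w\|_{C^{\alpha,\alpha/2}} \le R\}$ and yields a unique fixed point; uniqueness in the full class $C^{\alpha,\alpha/2}$ follows by reducing any two solutions to a Stokes problem with source $\mathrm{div}(u_1\otimes u_1 - u_2\otimes u_2)$ and again using the $(T^*)^\gamma$ smallness together with continuity in $t$ from $0$.

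\textbf{Main obstacle.} The delicate point is the divergence-form source estimate for $w$. The raw source $\mathrm{div}\,F$ is only a distribution when $F$ is merely Hölder, so one cannot differentiate; instead one must write $w$ via the half-space Stokes Green tensor applied directly to $F$ (integration by parts absorbs the divergence into a kernel derivative) and then show that the resulting operator gains Hölder regularity in both space and time with a small prefactor $(T^*)^\gamma$. The nonlocal pressure correction in the half space forces one to check, as in the linear step, that the trace of $F_{nn}$ interacts properly with the tangential Riesz operators $R'$, which is why the $R'g_n$, $R'\ti h_n$ hypotheses reappear implicitly through the structure of $v$. Closing this divergence-form Schauder estimate in the anisotropic Hölder norm, with explicit smallness in $T^*$, is the technical heart of the argument.
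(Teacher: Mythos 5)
Your proposal is correct and follows essentially the same route as the paper: the paper also reduces the nonlinear problem to repeated application of the linear Stokes estimate (Theorem \ref{thm1}, whose $\max\{T^{1/2},T^{1/2+\alpha/2}\}$ prefactor on the divergence-form source is exactly the $(T^*)^\gamma$ smallness you identify as the technical heart), combined with the bilinear product estimate in $C^{\alpha,\alpha/2}$, and closes via successive approximations $u^{m+1}$ solving the Stokes problem with force $-\mathrm{div}(u^m\otimes u^m)$ — the Picard-iteration form of your contraction argument — with uniqueness obtained by the same smallness argument iterated over finitely many subintervals. The only detail you omit is the verification that the limit satisfies the weak formulation (in particular the interior bound $\nabla u\in L^\infty(K\times(\delta,T))$ required by the paper's definition), which the paper checks from the explicit smoothness of the components of the linear representation.
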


There are abundant literature for the solvability of the Navier-Stokes system \eqref{maineq2}  when $g=0$.
 When  $h\in C^{s}(\R_+)$ for $s >2$,  V.A. Solonnikov \cite{sol1}  showed the local in time existence of the unique  solution
 $ u\in C^{s, \frac{s}2 }(\R_+\times (0,T))
 $. See also \cite{maremonti2}. In \cite{sol3}, he also showed the local in time existence of the unique  solution $u\in C(\R_+\times (0,T))$   when $h\in C(\R_+)$.
In \cite{maremonti}, P. Maremonti showed the unique existence of classical solution
of the Navier-tokes system when the initial data is nonconvergent at infinity.
Theorem \ref{thm3} generalizes  the solvability result in \cite{sol1} to a nonzero boundary data $g\in C^{\al,\frac{\al}{2}}(\Rn\times (0,\infty))$  for $0 < \al <1$.

Navier-Stokes system in the half space has been studied mostly in $p$-frame work
 (Here $p$-frame work means function spaces such as $L^p$'s, $W^{k,p}$'s, $1<p<\infty$, or their
  interpolation spaces, and $\infty$-frame work means such as $L^\infty$'s,  $W^k_\infty$'s, or their interpolation spaces).
See \cite{amann,cannone,kozono1,sol1} and references therein for  the solvability of the Navier-Stokes system in the half space with   homogeneous boundary data, that is, with $g=0$.
See \cite{fernandes,amann1,amann2,lewis,voss}  and references therein for the solvability of the Navier-Stokes system  in the half space with  the nonhomogeneous boundary data, that is,  with $g\neq 0$.

There are  many  literatures  for the study of the
 Navier-Stokes system in other domain such as whole space, a bounded domain, or exterior domain (with homogeneous or nonhomogeneous boundary data).
If we mention papers using $\infty$-frame work,  see \cite{giga,koch,kozono, sawada,amann,sol5} and the references therein.
If we mention papers using $p$-frame work, see \cite{amann, amann1, amann2, farwig1, farwig2, farwig3, giga1,grubb, kato} and the references therein.

Although the unsteady Navier-Stokes equations with  low regular boundary data have been studied in several papers such as \cite{fernandes,amann1,amann2,farwig1,farwig2,farwig3}, etc,
 we are interested in finding optimal regularity (in space-time) of the solution corresponding to the given  data.
As a first step we consider the H$\ddot{\rm o}$lder continuous Diriclet boundary data.
In our forthcoming paper we would like to consider optimal regularity (in space-time) of the solution when low regular boundary data is given.

For the proof of Theorem \ref{thm3}, it is necessary to study the initial-boundary value problem of the Stokes system in $\R_+ \times (0,T)$:
\begin{align}\label{maineq}
\begin{array}{l}\vspace{2mm}
u_t - \De u + \na p =f, \qquad div \, u =0, \mbox{ in }
 \R_+\times (0,T),\\
\hspace{20mm}u|_{t=0}= h, \qquad  u|_{x_n =0} = g.
\end{array}
\end{align}

The following states our result on  the unique solvability of the Stokes system \eqref{maineq}.
 \begin{theo}
 \label{thm1}
For $0<\al<1$ and $T>0$, let $h\in C^{\alpha}(\R_+), \,
 % f=\mbox{div}\,{\mathcal F}, \, {\mathcal F} \in C^{\alpha,\frac{\al}2 }(\R_+\times (0,T)), \,
  \ g\in C^{\alpha,\frac{\al}2 }(\Rn\times (0,T))$  satisfy the same hypotheses as in  Theorem \ref{thm3}.
   %We assume that
 %\begin{equation*}
% \label{hp3}
%g|_{t=0}= h|_{x_n=0},\ \mbox{div }h=0,\  R'g_n\in L^\infty(\Rn\times (0,T)),\quad R'{h}_n \in L^\infty(  \R  ).
%  \end{equation*}
   Moreover, we assume that $
   f=\mbox{div}\,{\mathcal F},$ where $ {\mathcal F} \in C^{\alpha,\frac{\al}2 }(\R_+\times (0,T))$ with an extension
   %
%   there are  $\tilde{h}\in C^{\alpha}(\R)$,
 $\tilde F\in C^{\alpha,\frac{\al}2 }(\R\times (0,T))$.
 %which are the extensions of $h$ and $f$ to $\R$ and $\R\times (0,T)$, respectively, such that
%$
% \mbox{div} \, \tilde{h}=0.
% $
 %,\ R'\tilde{h}_n\in L^\infty(\R).\]
 Then, Stokes system
\eqref{maineq} has  a  unique solution  $u\in C^{\al,\frac{\al}2  }({\mathbb R}^n_+\times (0,T))$   with appropriate distribution $p$
 with
  \begin{align}
  \label{es2}
\notag\| u\|_{{C}^{\al,\frac{\al}2 }_{\infty}({\mathbb R}^n_+\times (0,T))}
 \leq& c\Big( \|h\|_{{C}^{\alpha}(\R_+)}+\max\{T^\frac12, T^{\frac{1}{2}+\frac{\al}{2}}\} \| {\mathcal F}\|_{{C}^{\alpha,\frac{\al}2 }_\infty(\R_+\times (0,T))}\\
 &\qquad+\|g\|_{{C}^{\alpha,\frac{\al}2  }(\Rn\times (0,T))} +  \|R'{h}_n  \|_{   L^\infty(  \R )} +\|R'g_n\|_{L^\infty(\Rn\times (0,T))}\Big).
 \end{align}
%Moreover, if $R'\tilde{h}_n\in L^\infty(\R)$ and $R'g_n\in L^\infty(\Rn\times (0,T))$, then
%  and
%  \begin{align}\label{es2}
%\notag  \| u\|_{L^{\infty}({\mathbb R}^n_+\times (0,T))}
%& \leq c \Big(\|h\|_{L^\infty(\R_+)}
% +{T}^{\frac12    }\|  {\mathcal F}\|_{ {C}^{\al,\frac{\al}2 }_\infty(\R_+\times (0,T))}
%+\|g\|_{L^\infty(\Rn\times (0,T))}\\
%& \quad  +  \|R'{h}_n  \|_{   L^\infty(  \R )} +c\|R'g_n\|_{L^\infty(\Rn\times (0,T))}\Big),\ T\leq 1.
% \end{align}

 \end{theo}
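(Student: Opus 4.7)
The plan is to split \eqref{maineq} into (a) a Cauchy problem in $\R$ carrying the initial data and the forcing, and (b) a residual initial-boundary value problem in $\R_+$ with zero initial data, zero forcing, and a corrected Dirichlet boundary datum; then to control each piece in the $C^{\al,\frac{\al}{2}}$ norm.

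For (a), using the divergence-free extension $\tilde h\in C^\al(\R)$ of $h$ and the given extension $\tilde F\in C^{\al,\frac{\al}{2}}(\R\times(0,T))$, I would define
\begin{equation*}
v(\cdot,t) = e^{t\De}\tilde h + \int_0^t e^{(t-s)\De}\mathbb{P}\,\mbox{div}\,\tilde F(\cdot,s)\,ds,
\end{equation*}
where $\mathbb{P}$ denotes the Leray projector on $\R$. The caloric extension of a $C^\al(\R)$ function belongs to $C^{\al,\frac{\al}{2}}(\R\times(0,T))$ by classical Schauder theory for the heat equation; and since $\mathbb{P}\,\mbox{div}$ is a matrix of second-order Riesz transforms, the Duhamel integral is H\"older continuous with norm controlled by $\max\{T^{\frac12},T^{\frac12+\frac{\al}{2}}\}\,\|\mathcal F\|_{C^{\al,\frac{\al}{2}}}$. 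The $L^\infty$-subtlety inherent to Riesz transforms acting on H\"older data is handled here by the hypothesis $R'\tilde h_n\in L^\infty$, which gives uniform control of $\mathbb{P}\tilde h$ rather than only of its H\"older increments.

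For (b), I would set $w=u-v$ on $\R_+\times(0,T)$, so that $w$ satisfies the homogeneous Stokes system with $w|_{t=0}=0$ and boundary value $G:=g-v|_{x_n=0}\in C^{\al,\frac{\al}{2}}(\Rn\times(0,T))$. Here I would invoke the explicit half-space Poisson representation of Solonnikov,
\begin{equation*}
w_j(x,t)=\sum_{k=1}^n\int_0^t\!\!\int_{\Rn}K_{jk}(x'-y',x_n,t-s)\,G_k(y',s)\,dy'\,ds,
\end{equation*}
together with the associated pressure formula. The kernel $K$ decomposes into a Gaussian/heat-kernel piece and a non-local piece arising from enforcing $\mbox{div}\,w=0$; the latter, when acting on the normal component $G_n$, is essentially the tangential Riesz transform $R'G_n$. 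The combined hypotheses $R'g_n\in L^\infty$ and $R'v_n|_{x_n=0}\in L^\infty$ (which follows from $R'\tilde h_n\in L^\infty$ because $R'$ commutes with $e^{t\De}$, together with Riesz bounds on the extended forcing) deliver the needed uniform control of $R'G_n$.

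The main obstacle will be the quantitative H\"older analysis of the kernel $K$. The Gaussian pieces yield the $C^{\al,\frac{\al}{2}}$ bound by standard anisotropic H\"older-potential estimates, with time integration producing the $T^{\frac12}$ and $T^{\frac12+\frac{\al}{2}}$ factors; the non-local pieces require a careful near-/far-field splitting of the kernel and separate treatment of tangential increments, time increments, and behaviour in the normal variable $x_n$ up to the boundary. Assembling the three contributions yields \eqref{es2}. Uniqueness then follows at once: any two $C^{\al,\frac{\al}{2}}$-solutions differ by a solution of the Stokes system with zero data, and the representation formula applied to zero inputs forces the difference to vanish, with the pressure determined up to an additive function of time.
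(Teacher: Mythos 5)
Your overall architecture --- a whole-space Cauchy problem absorbing $\tilde h$ and $\mathbb{P}\,\mbox{div}\,\tilde F$, followed by a residual half-space problem with zero initial data and zero forcing and a corrected boundary value --- matches the paper's, and you correctly locate where the hypotheses $R'g_n\in L^\infty$ and $R'\tilde h_n\in L^\infty$ must enter. But the step you defer as ``the main obstacle'' is in fact the entire analytic content of the theorem, and the route you sketch for it would not go through as written. Your representation for $w$ sums over all $k=1,\dots,n$, i.e.\ it feeds the normal component $G_n=g_n-v_n|_{x_n=0}$ directly into the layer potential. The paper's kernel formula \eqref{simple} and the key estimate (Theorem \ref{Rn-1}) are stated and used only for boundary data of the form $G=(G',0)$; the column of the kernel matrix attached to $G_n$ is more singular, and no $C^{\al,\frac{\al}{2}}$ bound for it is available (even the $L^\infty$ maximum-modulus estimate \eqref{known1} of Chang--Choe costs an extra $\|R'g_n\|_{L^\infty}$ term). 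The paper's way around this is a separate harmonic-gradient corrector $\nabla\phi$, with $\phi$ built from the Newtonian potential of the normal residual $g_n-v_n|_{x_n=0}-V_n|_{x_n=0}$ on the boundary: subtracting $\nabla\phi$ kills the normal component of the boundary data at the price of inserting $-R'(g_n-v_n|_{x_n=0}-V_n|_{x_n=0})$ into the tangential components, and only then is the Solonnikov potential applied. This corrector is absent from your decomposition, and without it (or a genuinely new estimate for the normal-component column of the kernel) the argument does not close.

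Second, for the H\"older estimate of the tangential layer potential you propose a direct near-/far-field kernel analysis. That is a substantial computation you have not carried out, and the paper deliberately avoids it: Theorem \ref{Rn-1} is obtained by real interpolation between Solonnikov's estimate $\|w\|_{\dot C^{s,s/2}}\le c\|G\|_{\dot C^{s,s/2}}$ for $s>2$ and the Chang--Choe $L^\infty$ bound, both for $G=(G',0)$. Without either that interpolation device or the completed kernel analysis, the central estimate remains unproved. A smaller point: your uniqueness argument (``the representation formula applied to zero inputs forces the difference to vanish'') presupposes that every $C^{\al,\frac{\al}{2}}$ weak solution with zero data is given by the representation formula, which is itself a uniqueness assertion; what is needed is that the a priori bound \eqref{es2} applies to arbitrary weak solutions of the system, not only to the one you construct.
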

%Here $\dot{C}^{s }(\Omega )$ and $\dot{C}^{s,\frac{s}2 }(\Omega \times (0,T))$ are  the homogeneous  H$\ddot{\rm o}$lder spaces and the homogeneous anisotropic H$\ddot{\rm o}$lder  spaces, respectively(see section \ref{notation}).
%For $s>0$, $C^{s,\frac{s}2 }(\Omega \times (0,T))=\dot{B}_\infty^{s,\frac{s}2 }(\Omega \times (0,T))\cap L^\infty(\Omega \times (0,T)).$

When $h\in {C}^{s} ({\mathbb R}^3_+), f\in  {C}^{s-2,\frac{s}2 -1}({\mathbb R}^3_+\times (0,T))$ and $ g\in C^{s,\frac{s}2 }({\mathbb R}^2 \times (0,T))$  for $s>2$, V.A. Solonnikov \cite{sol2}  showed that there is a unique solution of the Stokes system \eqref{maineq} so that
\[
  \| u\|_{{\dot C}^{s,\frac{s}2 }({\mathbb R}^3_+\times (0,T))}
 \leq c\Big( \|h\|_{\dot{C}^{s}({\mathbb R}^3_+)} +\|f\|_{{\dot C}^{s-2,\frac{s}2-1}({\mathbb R}^3_+\times (0,T))}+\|g\|_{\dot {C}^{s,\frac{s}2}({\mathbb R}^2 \times (0,T))}
  + \| R'(D_t g_{3}) \|_{L^\infty({\mathbb R}^2 ; \dot {C}^{\frac{s}2} (0,T)) } \Big).
\]
%where $\dot {B}_\infty^{\al,\frac{\al}2 }(\Omega \times (0,T))$ is the anisotropic H$\ddot{\rm o}$lder spaces defined in section \ref{function spaces}.
Theorem \ref{thm1} generalizes the result of  \cite{sol2} to $0<s <1$.
Our result could be compared with the result in \cite{raymond1}, where  $V^{s,\frac{s}{2}}(\Omega \times (0,T))$ , $0\leq s\leq 2$, has been considered as a solution spaces  in  a bounded domain (see \cite{raymond1} for the definition of $V^{s,\frac{s}{2}}(\Omega \times (0,T))$).

There are various literatures for the solvability of the  Stokes system \eqref{maineq}
with homogeneous boundary data, that is, with $g=0$. See \cite{shimizu,maremonti,maremonti3,sol1,sol3}, and references therein.
In particular, % Y.Giga, S.Matsui, and Y.Shimizu
 the following estimate is derived in  \cite{sol3}:
%that
\begin{align}
\label{known2}
\| u\|_{L^\infty({\mathbb R}^n_+ \times  (0,T))} \leq c\Big(
\|h\|_{L^\infty({\mathbb R}^{n}_+)}+T^{\frac{1}{2}}\|{\mathcal F}\|_{L^\infty(\R_+\times (0,T))} \Big),
 \end{align}
 where $h\in C(\R_+)$ and $f=\mbox{div}\, {\mathcal F}, {\mathcal F}=(F_{kj})_{j,k=1}^n\in C(\R_+\times (0,T))$ with $ \mbox{div }h=0,  \ h|_{x_n=0}=0, \ (F_{n1},\cdots, F_{nn})|_{x_n=0}=0$. See also \cite{shimizu}.
% (see also \cite{shimizu}).
%

When $f=0$ and  $h=0$,    T.K. Chang and  H.J. Choe \cite{CC}
 showed that
\begin{align}
\label{known1}
\| u\|_{L^\infty({\mathbb R}^n_+ \times  (0,T))} \leq
c \Big( \|g\|_{L^\infty({\mathbb R}^{n-1} \times (0,T))} + \| R' g_n \|_{L^\infty(\Rn \times (0,T)) } \Big),
 \end{align}
where $g \in L^\infty(\Rn\times (0,T)), \ R'g_n\in L^\infty(\Rn\times (0,T)),\ g|_{t=0}=0$. See also \cite{So2,So3}.

We organized  this paper as follows.
In section \ref{notation}, we introduce the  notations and the function spaces such as anisotropic Besov spaces and the anisotropic H$\ddot{\rm o}$lder spaces.
In section \ref{zero}, we  consider Stokes system \eqref{maineq} with  the homogeneous external force and homogeneous initial velocity, and give the proof of Theorem \ref{Rn-1}.
%, which  can be understood the proof  of Theorem \ref{thm1} for the data $f=h=0$ and $g_n=0$.
In section \ref{general}, we  complete the proof of Theorem \ref{thm1} with the help of Theorem \ref{Rn-1}.
In section \ref{proof.thm3}, we give the proof of Theorem \ref{thm3} by constructing approximate solutions.

%\begin{rem}
%
%???????????????
%
%Let $g \in L^\infty(\Rn\times (0,T))$, $h\in C(\R_+)$ and $f=\mbox{div}{\mathcal F}, {\mathcal F}=(F_{kj})_{j,k=1}^n\in C(\R_+\times (0,T))$ with $ \mbox{div}h=0,  \ h|_{x_n=0}=g|_{t=0}=0, \ (F_{n1},\cdots, F_{nn})|_{x_n=0}=0$ with $g_n=0$.
%%, and $g_n|_{t=0}=h_n|_{x_n=0}=0$.
%Then the  known estimates \eqref{known1} and \eqref{known2}  imply that  $u\in L^{\infty}({\mathbb R}^n_+\times (0,T))$
%which satisfies the Stokes system
%\eqref{maineq} with appropriate distribution $p$  with
%  \[
%\| u\|_{L^{\infty}({\mathbb R}^n_+\times (0,T))}
% \leq c\|h\|_{L^\infty(\R_+)}+c{T}^{\frac{1}{2}}\|\mathcal F\|_{L^\infty(\R_+\times (0,T))}+c\|g\|_{L^\infty(\Rn\times (0,T))}.
% \]
%
%\end{rem}

\section{\bf Notations and  Definitions}\label{notation}
\setcounter{equation}{0}

The points of spaces $\Rn$ and $\R$ are denoted by  $x'$ and $x=(x',x_n)$, respectively.
%We denote $x  = (x',x_n) \in {\mathbb R}^n_+$ for $x'\in \Rn $ and denote
The multiple derivatives are denoted by $ D^{k}_x D^{m}_t = \frac{\pa^{|k|}}{\pa x^{k}} \frac{\pa^{m} }{\pa t}$ for multi index
$ k$ and nonnegative integer $ m$.
For vector field $f=(f_1,\cdots, f_n)$ on $\R$, set $f'=(f_1,\cdots, f_{n-1})$ and $f=(f',f_n)$.
Throughout this paper we denote by $c$ various generic constants.% and by $c(*,\cdots,*)$
%the constants depending only on the quantities appearing  in the parenthesis.

For the Banach space $X$,  $X'$  denotes the dual space of $X$. %For $1\leq p\leq \infty$, we write $p'=\frac{p}{p-1}$. %For an integer $m$, ${\mathcal S}({\mathbb R}^m)$ stands for the Schwartz space of all complex-valued rapidly decreasing and  infinitely differentiable  function on ${\mathbb R}^m$, and let ${\mathcal S}'({\mathbb R}^m)$ be the set of all tempered distributions on ${\mathbb R}^m$.
 {For  the a $m$-dimensional smooth domain $\Omega$,
 $C^\infty_0(\Omega)$ stands for the collection of all complex-valued infinitely differentiable  functions in ${\mathbb R}^m$ compactly supported in $\Omega$. %,% and let ${\mathcal D}'(\Omega)$ be the set of distributions on $\Omega$.
%% }}
%For $F\in {\mathcal S}'({\mathbb R}^m)$,     $F|_\Omega\in {\mathcal D}'(\Omega)$  defined by $F|_\Omega(\phi)=F(\phi)$ for $\phi\in C^\infty_0(\Omega)$.
%
%Let $1\leq p\leq \infty$.
 %For nonnegative integer $k$,
 Let $1\leq p\leq \infty$ and $k$ be a nonnegative integer.
  The usual Sobolev spaces and homogeneous Sobolev spaces are denoted by  $W^k_p(\Omega)$ and $\dot{W}^k_p(\Omega)$, respectively. Note that $W^0_p(\Omega)=\dot{W}^0_p(\Omega)=L^p(\Omega)$. Let  $0<\al<1$. The usual H$\ddot{\rm o}$lder spaces and the homogeneous H$\ddot{\rm o}$lder spaces are denoted by  $C^{k+\al}(\Omega)$ and $\dot{C}^{k+\al}(\Omega)$, respectively.

 It is known that $C^{k+\al}(\Omega)=B^{k+\al}_\infty(\Omega)$ and $\dot{C}^{k+\al}(\Omega)=\dot{B}^{k+\al}_\infty(\Omega)$ with equivalent norms, where $B^s_p(\Omega)$ and $\dot{B}^s_p(\Omega)$ are the usual Besov spaces and the homogeneous Besov spaces, respectively.
 %It is known that $B^s_p(\Omega)=L^p(\Omega)\cap \dot{B}^s_p(\Omega)$ for $s>0$..
 See \cite{BL,St,Tr,Triebel, Triebel1, Triebel2} for the definition of  Besov spaces and their  properties.

Let $\Om$ be a domain in $m$-dimensional domain and $I$ be an open interval.
Anisotropic H$\ddot{\rm o}$lder spaces
$C^{k+\al,\frac{k+\al}{2}}(\Omega\times I)$  and  homogeneous anisotropic H$\ddot{\rm o}$lder spaces $\dot{C}^{k+\al,\frac{k+\al}{2}}(\Omega\times I)$
%, so called, the anisotropic H$\ddot{\rm o}$lder spaces and  the  homogeneous anisotropic H$\ddot{\rm o}$lder spaces, respectively, % with equivalent norms.   Anisotropic H$\ddot{\rm o}$lder space
%$C^{k+\al,\frac{k+\al}2} (\Omega \times I
%)$  and $\dot{C}^{k+\al,\frac{k+\al}2} (\Omega \times I
%)$
are the set of functions on $\Omega \times I$ normed with
\[
%C^{k+\al,\frac{k+\al}2} (\Omega \times I):=\{f\in C^{k}(\Omega\times I):
\| f\|_{C^{k+\al,\frac{k+\al}2} (\Omega \times I
)} : =\sum_{|l|+2l_0\leq k}\|D^{l_0}_tD^l_xf\|_{L^\infty(\Omega\times I )} + \sum_{|l|+2l_0= k}[D^{l_0}_tD^l_x f]_{\al,\Omega\times I} < \infty
%\},
\]
%and
\[%\dot{C}^{k+\al,\frac{k+\al}2} (\Omega \times I):=\{f\in C^{k}(\Omega\times I):
\| f\|_{\dot{C}^{k+\al,\frac{k+\al}2} (\Omega \times I
)} : = \sum_{|l|+2l_0= k}[D^{l_0}_tD^l_x f]_{\al,\Omega\times I} < \infty,
%\},
\]
where
\[
[ f]_{\al,\Omega\times I} : = \sup_{t\in I}\sup_{x\neq y \in \Omega}\frac{|f(x,t) -  f(y,t)|}{|x-y|^{\al }}+ \sup_{x\in \Omega}\sup_{s\neq t \in I}\frac{|f(x,t) -  f(x,s)|}{|t-s|^{\frac{\al}{2} }}.
\]

%For $0<\al<1$ and nonnegative integer $k$,

The properties of anisotropic H$\ddot{\rm o}$lder spaces are the same as  the properties of  H$\ddot{\rm o}$lder spaces. For example, ${C}^{k+\al,\frac{k+\al}{2}}(\Omega\times I)=B^{k+\al,\frac{k+\al}{2}}_\infty(\Omega\times I)$ and $\dot{C}^{k+\al,\frac{k+\al}{2}}(\Omega\times I)=\dot{B}^{k+\al,\frac{k+\al}{2}}_\infty(\Omega\times I)$ with equivalent norms, where $B^{s,\frac{s}{2}}_p(\Omega\times I)$ and $\dot{B}^{s,\frac{s}{2}}_p(\Omega\times I)$ are the anisotropic Besov spaces and the homogeneous anisotropic Besov spaces, respectively (For the definition of anisotropic Besov spaces and the homogeneous anisotropic Besov spaces, see \cite{BL, Triebel}).
 %It is known that $B^s_p(\Omega)=L^p(\Omega)\cap \dot{B}^s_p(\Omega)$ for $s>0$..
  The properties of the  anisotropic Besov spaces in $\Omega \times I$ are comparable with the properties of Besov spaces in $\Omega$, whose proof  can be shown by the same arguments as in  \cite{BL,St,Tr,Triebel,Triebel1,Triebel2}.
 %are %equal to the anisotropic H$\ddot{\rm o}$lder space
 %$C^{k+\al,\frac{k+\al}{2}}(\Omega\times I)$
 %and $\dot{C}^{k+\al,\frac{k+\al}{2}}(\Omega\times I)$, respectively,
 %with equivalent norms, where
% Here
  %denote
  %the anisotropic H$\ddot{\rm o}$lder spaces and  the  homogeneous anisotropic H$\ddot{\rm o}$lder spaces, respectively, which

  %, where % with equivalent norms.   Anisotropic H$\ddot{\rm o}$lder space
%$C^{\al,\frac{\al}2} (\Omega \times I
%)$ is  the set of functions on $\Omega \times I$ satisfying
%\[
%\| f\|_{C^{\al,\frac{\al}2} (\Omega \times I
%)} : =\|f\|_{L^\infty(\Omega\times I )} + [ f]_{\al,\Omega\times I} < \infty,
%\]
%%where
%\[
%[ f]_{\al,\Omega\times I} : = \sup_{t\in I}\sup_{x\neq y \in \Omega}\frac{|f(x,t) -  f(y,t)|}{|x-y|^{\al }}+ \sup_{x\in \Omega}\sup_{s\neq t \in I}\frac{|f(x,t) -  f(x,s)|}{|t-s|^{\frac{\al}{2} }}.
%\]
%See \cite{BL,Tr,Triebel} for details.
%????homogeneous anisotropic holder spaces????

\begin{defin}[Weak solution to the Stokes system]
Suppose that $f=\mbox{div}{\mathcal F}, \ {\mathcal F}=\{F_{ij}\}_{i,j=1}^n\in C^{\al,\frac{\al}{2}}(\R_+\times (0,T))$, $g\in C^{\al,\frac{\al}{2}}(\Rn\times (0,T))$ and $h\in C^\al(\R_+)$.
Then a vector field $u\in C^{\al,\frac{\al}{2}}(\R_+\times (0,T))$ is called a weak solution of the Stokes system \eqref{maineq} if the following conditions are satisfied:\\
%1)
%$\nabla u\in L^\infty(K  \times (\delta, T))$  for each $\delta > 0$ and for each compact subset $K$ of $\R_+$,
%\\
1)
 %$u$ satisfies
\[
\int^T_0\int_{\R_+}\nabla u:\nabla \Phi dxdt=\int^T_0\int_{\R_+}u\cdot \Phi_t-{\mathcal F}:\nabla \Phi dxdt\]
for each $\Phi\in C^\infty_0(\R_+\times (0,T))$ with $\mbox{div}_x\Phi=0$,
\\
2)
$u(x,0)=h(x) $ in $\R_+$ in trace sense.
\\
3)$u(x',0,t)=g(x',t)$ in $\Rn\times (0,T)$ in trace sense.

% distribution $p$ in $\R_+\times (0,T)$ is calledan associated pressure of a weak solution $u$ iff
\end{defin}

\begin{defin}[Weak solution to the Navier-Stokes system]
Suppose that  $g\in C^{\al,\frac{\al}{2}}(\Rn\times (0,T))$ and $h\in C^\al(\R_+)$.
Then a vector field $u\in C^{\al,\frac{\al}{2}}(\R_+\times (0,T))$ is called a weak solution of the Navier-Stokes system \eqref{maineq2} if the following conditions are satisfied:\\
1)
$\nabla u\in L^\infty(K  \times (\delta, T))$  for each $\delta > 0$ and for each compact subset $K$ of $\R_+$,
\\
2)
 %$u$ satisfies
\[
\int^T_0\int_{\R_+}\nabla u:\nabla \Phi dxdt=\int^T_0\int_{\R_+}u\cdot (\Phi_t-(\Phi\cdot \nabla)u) dxdt\]
for each $\Phi\in C^\infty_0(\R_+\times (0,T))$ with $\mbox{div}_x\Phi=0$,
\\
3)
$u(x,0)=h(x) $ in $\R_+$ in trace sense.
\\
4)$u(x',0,t)=g(x',t)$ in $\Rn\times (0,T)$ in trace sense.

\end{defin}
\section{\bf Stokes system with homogeneous external force and  initial velocity}

\label{zero}

\setcounter{equation}{0}

Let us   consider the following initial-boundary value problem of a
nonstationary Stokes system  in $\R_+ \times (0,T)$:
\begin{align}\label{maineq1}
\begin{array}{l}\vspace{2mm}
w_t - \De w + \na q =0, \qquad div \, w =0, \mbox{ in }
 \R_+\times (0,T),\\
\hspace{20mm}w|_{t=0}= 0, \qquad  w|_{x_n =0} = G.
\end{array}
\end{align}

%\subsection{Stokes solution formula}
In \cite{sol1}, an explicit formula for $w$
 %the solution
 of the Stokes system \eqref{maineq1} with boundary data $G=(G', 0)$  is
 obtained by
\begin{align}\label{simple}
w_i(x,t)& = \sum_{j=1}^{n-1}\int_0^t \int_{\Rn} K_{ij}( x'-y',x_n,t-s)G_j(y',s) dy'ds, \\
\label{simple1}
q(x,t)& = \sum_{j=1}^{n-1}\int_0^t \int_{\Rn} \pi_j(x'-y',x_n,t-s) G_j(y',s) dy'ds
.
\end{align}
Here,
\begin{align*}
K_{ij}(x,t) & = -2 \delta_{ij}D_{x_n}  \Ga(x,t)  +4 D_{x_j}\int_0^{x_n} \int_{\Rn}  D_{z_n}  \Ga(z,t)  D_{x_i} N(x-z)  dz,\\
% {K}_{ij}^{(0)} (x,t) & =D_{x_j}\int_0^{x_n} \int_{\Rn}  D_{z_n}  \Ga(z,t)  D_{x_i} N(x-z)  dz,\\
\pi_j (x,t) & =-2\delta(t)D_{x_j} D_{x_n}N(x)
      +4D_{x_j}D^2_{ x_n}A(x,t)
+4 D_t D_{x_j} A(x,t),\\
 A(x,t) & =\int_{\Rn}\Ga(z',0,t)N(x'-z',x_n)dz',
\end{align*}
where $  \Ga$ and $N$ are fundamental solutions of heat equation and Laplace equation in $\R$, respectively, that is,
\[
 \Gamma(x,t)=\left\{\begin{array}{ll} \vspace{2mm}
 \frac{c}{ (2\pi t)^{\frac{n}{2}}}e^{-\frac{|x|^2}{4t}}&\mbox{ if }t>0,\\
 0& \mbox{ if }t\leq 0,
 \end{array}\right. \quad \mbox{and} \quad    N(x) = \left\{\begin{array}{ll}
 \vspace{2mm}
  \frac{1}{\om_n (2-n)|x|^{n-2}}&\mbox{ if }n\geq 3,\\
 \frac{1}{2\pi}\ln |x|&\mbox{ if }n=2.\end{array}\right.
 \]
%where  $\om_n$ is measure of the $n$-dimensional unit  surface.

 \begin{theo}\label{Rn-1}
Let  $0<\alpha<1 $.
Let  $G \in \dot{ C}^{\al,\frac{\al}2 }({\mathbb R}^{n-1} \times (0,\infty))$ with
$G_n=0$.  We also assume that $G\Big|_{t=0} =0$.
Then, the function $w$ defined by \eqref{simple} is in $\dot{C}^{\al,\frac{\al}2}({\mathbb R}^{n-1}_+\times (0,\infty))$  and satisfies
\[
%\label{m33}
\| w\|_{\dot{C}^{\al,\frac{\al}2 }({\mathbb R}^n_+\times (0,\infty))}
 \leq c\|G\|_{\dot{C}^{\al,\frac{\al}2  }({\mathbb
R}^{n-1} \times (0,\infty))}.
 \]
\end{theo}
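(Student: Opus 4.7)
The plan is to split the kernel $K_{ij}$ in \eqref{simple} into a Poisson-type heat piece and a correction, and to treat each piece separately. Write $K_{ij} = K^{(1)}_{ij} + K^{(2)}_{ij}$ where $K^{(1)}_{ij}(x,t) = -2\de_{ij} D_{x_n}\Ga(x,t)$ and $K^{(2)}_{ij}$ is the remaining iterated-integral correction term. Correspondingly split $w = v + r$. The component $v_i$ is obtained by convolving $G_i$ against $-2 D_{x_n}\Ga$, so it is the unique bounded solution of $\pa_t v_i - \De v_i = 0$ in $\R_+ \times (0,\infty)$ with Dirichlet data $G_i$ on $\{x_n = 0\}$ and zero initial data. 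Since $G \in \dot{C}^{\al,\al/2}$ vanishes at $t=0$, classical Schauder-type boundary estimates for the heat equation in the half space yield
\[
\|v\|_{\dot{C}^{\al,\al/2}(\R_+ \times (0,\infty))} \leq c\,\|G\|_{\dot{C}^{\al,\al/2}(\Rn\times (0,\infty))}.
\]

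For the correction $r$, the strategy is to establish parabolic-homogeneous pointwise bounds on $K^{(2)}_{ij}$ and its derivatives, of the form
\[
|D_x^\ga D_t^\be K^{(2)}_{ij}(x,t)| \leq c\,(|x|+\sqrt{t})^{-(n+|\ga|+2\be)} \qquad \text{for } x\in\R_+,\ t>0.
\]
I would obtain these by using the identity $\int_0^{x_n} D_{z_n}\Ga(z',z_n,t)\,dz_n = \Ga(x',x_n,t) - \Ga(x',0,t)$ to collapse the $z_n$-integral wherever the structure permits, integrating by parts in $z'$ to shift $D_{x_i}$ off $N$, exploiting $\De N = \de_0$, and splitting into the regimes $|x|\lesssim\sqrt{t}$ and $|x|\gtrsim\sqrt{t}$. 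Once such pointwise bounds are in hand, the vanishing of $G$ at $t=0$ lets one view $r$ as the convolution of $G$ (extended by zero for $s<0$) with a parabolic-homogeneous kernel, and the H\"older seminorm estimate follows from the standard cancellation argument: for $(x,t),\,(\bar x,\bar t)\in\R_+\times(0,\infty)$, choose a base point $(x_0',t_0)$, subtract $G_j(x_0',t_0)$ inside the integral (legitimate because the kernel becomes integrable after one spatial derivative), and split the $(y',s)$ domain into a near-diagonal region where the H\"older seminorm of $G$ is applied directly to the integrand and a far region where the mean-value form of the kernel difference is used.

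The principal obstacle is the pointwise analysis of $K^{(2)}_{ij}$. This kernel mixes the parabolic scaling of $\Ga(z,t)$, self-similar at scale $\sqrt{t}$, with the Euclidean scaling of $\na N$, which is homogeneous of degree $-(n-1)$; moreover, the two outer derivatives $D_{x_j}$ and $D_{x_i}$ must be distributed carefully onto the inner integrand, picking up boundary terms at $z_n = 0$ and $z_n = x_n$. The intermediate regime $x_n \sim \sqrt{t}$ is the most delicate, as neither scaling dominates and the requisite cancellations between $\Ga(x',x_n,t)$ and $\Ga(x',0,t)$ (and between the terms produced by integration by parts) must be tracked with precision. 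Once this kernel analysis is complete, combining it with the heat-part bound yields the stated estimate on $w$.
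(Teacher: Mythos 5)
Your route is genuinely different from the paper's, and as written it has a concrete gap at its central step. The paper does not analyze the kernel at all: it quotes Solonnikov's Schauder estimate $\| w\|_{\dot C^{s,s/2}}\leq c\|G\|_{\dot C^{s,s/2}}$ for $s>2$ from \cite{sol2}, quotes the maximum modulus estimate $\|w\|_{L^\infty}\leq c\|G\|_{L^\infty}$ for $G=(G',0)$ from \cite{CC}, and obtains the case $0<\al<1$ by real interpolation of the two bounds for the same solution operator. Your proposal instead attempts a direct potential-theoretic proof, and the key claimed ingredient --- the pointwise bound $|K^{(2)}_{ij}(x,t)|\leq c(|x|+\sqrt t)^{-n}$ (and its derivative analogues) --- is dimensionally impossible. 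The correction kernel $G_{ij}=D_{x_j}\int_0^{x_n}\int_{\Rn}D_{z_n}\Ga(z,t)D_{x_i}N(x-z)\,dz$ is parabolically homogeneous of degree $-(n+1)$, not $-n$ (count: $-1$ from the outer derivative, $-(n+1)$ from $D_{z_n}\Ga$, $-(n-1)$ from $D_{x_i}N$, $+n$ from $dz$), so no bound homogeneous of degree $-n$ can hold near the parabolic origin. The correct estimate, recorded in Remark \ref{rem1031} of the paper, is $|G_{ij}(x,t)|\leq c\,t^{-1/2}(|x|^2+t)^{-n/2}$, and the extra factor $t^{-1/2}$ is precisely what makes the problem hard.

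With the correct homogeneity, the kernel is critically singular: $\int_0^t\int_{\Rn}|G_{ij}(x'-y',x_n,t-s)|\,dy'\,ds$ diverges logarithmically as $x_n\to 0$, so $w$ is not given by an absolutely convergent integral against bounded data, and the ``standard cancellation argument'' you invoke is not standard here. Making it work requires exploiting the vanishing mean of $G_{ij}$ in $x'$ (it is an exact $x'$-derivative for $j\leq n-1$) together with the structural hypothesis $G_n=0$; this is exactly the content of \cite{CC} at the $L^\infty$ level and of \cite{sol2} at the Schauder level, i.e., the entire substance of the theorem. Your treatment of the heat part $-2\de_{ij}D_{x_n}\Ga$ by classical boundary Schauder theory is fine, and a direct proof along your lines is in principle possible (it is essentially Solonnikov's method), but as presented the proposal replaces the one genuinely difficult step by an incorrect kernel bound and an appeal to a routine argument that does not apply at this critical homogeneity. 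Either carry out the cancellation analysis for the degree-$(n+1)$ kernel in full, or follow the paper and interpolate between the two known endpoint estimates.
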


%{\color{red}{

\begin{proof}

According to the result of  V.A. Solonnikov \cite{sol2}, if  $ G=(G',0)\in \dot{C}^{s,\frac{s}2 }({\mathbb R}^2 \times (0,T
))$  for $s>2$, then $w$ defined by \eqref{simple} satisfies
% \eqref{maineq} so that
\[
  \| w\|_{{\dot C}^{s,\frac{s}2 }({\mathbb R}^3_+\times (0,T))}
 \leq c\|G\|_{\dot {C}^{s,\frac{s}2}({\mathbb R}^2 \times (0,T))}, \ s>2.
\]
%where $\dot {B}_\infty^{\al,\frac{\al}2 }(\Omega \times (0,T))$ is the anisotropic H$\ddot{\rm o}$lder spaces defined in section \ref{function spaces}.
%
The argument in \cite{sol2} can be applied for any $n\geq 2$ to obtain the same estimates as the above.

According to the result of  T.K. Chang and  H.J. Choe \cite{CC}, if $G=(G',0)\in L^\infty(\Rn \times (0,T)),$ then
 $w$ defined by \eqref{simple} satisfies
\[
\| w\|_{L^\infty({\mathbb R}^n_+ \times  (0,T))} \leq
c  \|G\|_{L^\infty({\mathbb R}^{n-1} \times (0,T))}.
 \]
%where $\dot {B}_\infty^{\al,\frac{\al}2 }(\Omega \times (0,T))$ is the anisotropic H$\ddot{\rm o}$lder spaces defined in section \ref{function spaces}.
Interpolate the  above two estimate, then we obtain the estimate in Theorem \ref{Rn-1}.

\end{proof}

\begin{rem}\label{rem1031}
Let $G_{ij}= D_{x_j}\int_0^{x_n} \int_{\Rn}  D_{z_n}  \Ga(z,t)  D_{x_i} N(x-z)  dz$.
 %Note that %K_{ij}$ satisfis the estimates
%\begin{align}\label{inequality11}
It is known that \begin{align}
%\label{inequality11}
|D^{l}_{x_n} D^{k0}_{x'} D_{t}^{m} G_{ij}(x,t)|
& \leq  \frac{c}{t^{m + \frac12} (|x|^2 +t )^{\frac12 n + \frac12 k} (x_n^2 +t)^{\frac12 l}},%\\
%|D^j_xD^m_tA(x,t)|&\leq \frac{c}{t^{m+\frac{1}{2}}(|x|^2+t)^{\frac{n-2+|j|}{2}}},
\end{align}
%where $ 1 \leq  i \leq n$ and $1 \leq j \leq n-1$ (see \cite{Sol}).
where $ 1 \leq  i \leq n$ and $1 \leq j \leq n-1$ (see \cite{sol1}).
%Since
%\begin{align*}
%\int_{\Rn}  K_{ij}(x,t) dx' =0, \qquad 1 \leq j \leq n-1,
%\end{align*}
%we have
%\begin{align*}
%D_{x_n}^{l_0} D_x^{k_0} u(x,t)
%& = \int_0^t \int_{\Rn} D_{x_n}^{l_0} D_x^{k_0} K_{ij}(x'-y', x_n, t-s) G_j(y', s)dy'ds \\
%& =  \int_0^t \int_{\Rn} D_{x_n}^{l_0} D_x^{k_0} K_{ij}(x'-y', x_n, t-s) (G_j(y', s) - G_j(x',s)) dy'ds\\
%& \leq c \|G\|_{\dot{C}^{\al,\frac{\al}2  }({\mathbb
%R}^{n-1} \times (0,\infty))} \int_0^t \int_{\Rn} \frac{|x' -y'|^\al}{(t-s)^{ \frac12} (|x' -y'|^2+ x_n^2 +t-s )^{\frac12 n + \frac12 k_0} (x_n^2 +(t-s))^{\frac12 l_0}}  dy'ds\\
%& = c \|G\|_{\dot{C}^{\al,\frac{\al}2  }({\mathbb
%R}^{n-1} \times (0,\infty))} \int_0^t \frac{1}{s^{ \frac12}  (x_n^2 +s)^{\frac12 l_0} } \int_{\Rn} \frac{|y'|^\al}{ (|y'|^2+ x_n^2 +s )^{\frac12 n + \frac12 k_0}}  dy'ds \\
%& = c \|G\|_{\dot{C}^{\al,\frac{\al}2  }({\mathbb
%R}^{n-1} \times (0,\infty))} \int_0^t \frac{1}{s^{ \frac12}  (x_n^2 +s)^{\frac12 l_0 + \frac12 k_0 -\frac12 \al + \frac12} } \int_{\Rn} \frac{|y'|^\al}{ (|y'|^2+1 )^{\frac12 n + \frac12 k_0}}  dy'ds.
%\end{align*}
%Hence, for $x_n^2 \geq t$, we have
%\begin{align*}
%|D_{x_n}^{l_0} D_x^{k_0} u(x,t)| \leq c x_n^{ -l_0-k_0 + \al -1} t^\frac12
%\end{align*}
%and for $x_n^2 \leq t $, we have
%\begin{align*}
%|D_{x_n}^{l_0} D_x^{k_0} u(x,t)| \leq c
%x_n^{-l_0-k_0   } t^{\frac12\al}.
%\end{align*}
Using the properties of Heat kernel $\Gamma$ and the estimates of $G_{ij}$,
%From the formula \eqref{simple},
 it is easy to see that
\begin{align*}
x_n^{-k +1-\al  }t^{-\frac{1}{2}}|D_{x}^{k} w(x,t)| \leq c\|G\|_{\dot{C}^{\al,\frac{\al}2  }({\mathbb
R}^{n-1} \times (0,T))}.
\end{align*}
Therefore, $w$ is smooth in $\R_+$ for each $t>0$.

\end{rem}

\section{\bf Stokes system with nonhomogeneous  external force and initial velocity}
\label{general}
\setcounter{equation}{0}

In this section we   consider the Stokes system \eqref{maineq} with  $f=\mbox{div}{\mathcal F}, g, h$ satisfying the hypothesis of Theorem \ref{thm1}.
\subsection{\bf Formal decompositions}

\label{stokesformula}
%In this section we decompose the Stokes system \eqref{maineq}.

Let
$\tilde{ \mathcal  F}$ be an extension of ${\mathcal F}$ to $\R\times (0,T)$
 and let $\tilde{f}=\mbox{div }\tilde{\mathcal F}$.
Let ${\mathbb P}$ be  the Helmholtz projection operator  on $\R$ defined by
\[
[{\mathbb P}\tilde{f}]_j(x,t)=\delta_{ij} \tilde f_i+\int_{\R} D_{x_i}D_{x_j}N(x-y)\tilde{f}_i(y,t)dy=\delta_{ij}\tilde{f}_i+R_iR_j\tilde{f}_i%=D_{x_k}[\delta_{ij}\tilde{F}_{ki}+R_iR_j\tilde{F}_{ki}],
\]
and define ${\mathbb Q}$ by
\[
{\mathbb Q}\tilde{f}=-\int_{\R} D_{x_i}N(x-y)\tilde{f}_i(y,t)dy.
\]
Then, we have
 \[
    \mbox{div}\,{\mathbb P}\tilde{f}=0\mbox{ in }\R\times (0,T) \quad \mbox{      and      } \quad
  \tilde{f}={\mathbb P}\tilde{f} +D_x{\mathbb Q}\tilde{f}.\]
 Note that $[{\mathbb P}\tilde{f}]_j=D_{x_k}[\delta_{ij}\tilde{F}_{ki}+R_iR_j\tilde{F}_{ki}]$ for  $\tilde{f}=\mbox{div}\tilde{\mathcal F}$.
%\]
Define  $V$ by
\begin{align}\label{V}
  V_j(x,t)=\int^t_{0}\int_{\R}D_{x_k}\Gamma(x-y,t-s)[\delta_{ij}\tilde{F}_{ki}+R_iR_j\tilde{F}_{ki}](y,s)dyds.
 \end{align}
 %Here $\Gamma(x,t)$ is the heat kernel defined by
% \[
% \Gamma(x,t)=\left\{\begin{array}{l} \vspace{2mm}
% \frac{1}{(2\pi t)^{\frac{n}{2}}}e^{-\frac{|x|^2}{4t}}\mbox{ if }t>0.\\
% 0\mbox{ if }t\leq 0.
% \end{array}\right.
% \]
   Observe that
  $V$  satisfies the equations
 \[
 %\label{heatsystem}
 \begin{array}{l}\vspace{2mm}
V_t - \De V  ={\mathbb P}\tilde{f},\ \mbox{div} \, V=0 \mbox{ in }
 \R\times (0,T), \\
\hspace{20mm}V|_{t=0}= 0\mbox{ on }\R.
\end{array}
\]

Let
 $\widetilde{h}$ be an  extension of  $h$ satisfying that $
 \mbox{div} \, \widetilde{h}=0\mbox{ in }\R$.
Define $v$ by
   \begin{equation}\label{small-v}
  v(x,t)=\int_{\R}\Gamma(x-y,t) \widetilde{h}(y)dy.
  \end{equation}
Observe that $v$ satisfies the equations
 \[
% \label{heatsystem}
\begin{array}{l}\vspace{2mm}
v_t - \De v  =0,\ \mbox{div} \, v=0 \mbox{ in }
 \R\times (0,T),\\
\hspace{20mm}v|_{t=0}= \tilde{h} \mbox{ on }\R.
\end{array}
\]
Define $\phi$ by
 \begin{equation}\label{phi}
 \phi(x,t)=2\int_{\Rn}N(x'-y',x_n)\Big(g_n(y',t)-v_n(y',0,t)-V_n(y',0,t)\Big)dy'.
 \end{equation}
  Observe that
 \begin{equation*}
 %\label{laplace}
  \Delta \phi=0, \qquad  \nabla\phi|_{x_n=0}=(R'(g_n-v_n|_{x_n=0}-V_n|_{x_n=0}), \,  g_n-v_n|_{x_n=0}-V_n|_{x_n=0}).
 \end{equation*}
Moreover, note that $ \nabla\phi|_{t=0}=0$  if $g_n|_{t=0}=h_n|_{x_n=0}$. Let
\begin{equation}
\label{G}
G =(G',0),\mbox{ where }G'=g' -V'|_{x_n=0} -v'|_{x_n=0} - R'(g_n -v_n|_{x_n=0}-V_n|_{x_n=0}).\end{equation}
Note that
   $G'|_{t=0}=0$ if $g|_{t=0}=h|_{x_n=0}$.

 Finally, let $(w,q)$ be defined by \eqref{simple} and \eqref{simple1} with $G$ defined by \eqref{G}. %, which satisfies the system
% \begin{align}\label{maineq4}
%\begin{array}{l}\vspace{2mm}
%w_t - \De w + \na q =0, \qquad div \, w=0, \mbox{ in }
% \R_+\times (0,T),\\
%\hspace{20mm}w|_{t=0}= 0, \qquad  w|_{x_n =0} = G.
%\end{array}
%\end{align}
 Then, $
 u=w+\nabla\phi+v+V\mbox{ and } p=q-\phi_t+{\mathbb Q}\tilde{f}
$  satisfies formally  the  nonstationary  Stokes system  \eqref{maineq}.

\subsection{Preliminary Estimates}

The subsequent propositions are the basic tool for the estimate of $v, V, \nabla \phi$ and $w$ introduced in the previous section.
See Appendix \ref{appen2}, Appendix \ref{appen3}, Appendix \ref{appen-riesz} and Appendix \ref{appen-poisson} for the proof of the Proposition \ref{proheat1}, Proposition \ref{propheat2}, Proposition \ref{propriesz} and Proposition \ref{proppoisson2},  respectively.
\begin{prop}
\label{proheat1} Let $0<\al$% and $k\in {\mathbb N}\cup\{0\}$
.
For $f\in {B}_\infty^{\alpha}(\R)$, define $u(x,t)=\int_{\R}\Gamma(x-y,t)f(y)dy$.
Then $u\in {B}_\infty^{\alpha, \frac{\al}2  }(\R\times (0,T))$ with
\[
\|u\|_{\dot{B}_\infty^{\al,\frac{\al}2  }(\R\times (0,\infty)}\leq c\|f\|_{\dot{B}_\infty^{\alpha}(\R)},\qquad
\|u\|_{L^\infty(\R\times (0,\infty))}\leq c\|f\|_{L^\infty(\R)}.
\]

Moreover, $u$ is smooth in $\R\times (0,T)$ with
\[
\sup_{x\in \R, t\in (0,\infty)}t^{m+\frac{|k|}{2}}|D^m_tD^k_xu(x,t)|\leq c\|f\|_{L^\infty(\R)}.\]
\end{prop}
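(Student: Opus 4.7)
The plan is to reduce everything to elementary estimates on the Gaussian kernel $\Gamma$, using the equivalence $\dot B^{\alpha}_\infty = \dot C^{\alpha}$ and $\dot B^{\alpha,\alpha/2}_\infty = \dot C^{\alpha,\alpha/2}$ for $0<\alpha<1$ stated in Section \ref{notation}. So it suffices to prove the claimed bounds with the Hölder seminorms in place of the Besov seminorms, which is a matter of three separate estimates: the $L^\infty$ bound, the smoothing estimate on derivatives, and the anisotropic Hölder seminorm.

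First I would dispose of the $L^\infty$ bound using positivity and unit mass of $\Gamma$: $|u(x,t)|\le \|f\|_{L^\infty}\int_{\mathbb R^n}\Gamma(x-y,t)\,dy=\|f\|_{L^\infty}$. For the derivative estimates $t^{m+|k|/2}|D_t^m D_x^k u(x,t)|\le c\|f\|_{L^\infty}$, I would differentiate under the integral, use the standard Gaussian bound $|D_t^m D_x^k\Gamma(x,t)|\le c\,t^{-n/2-m-|k|/2}e^{-c|x|^2/t}$, and integrate $y$ out against the bounded $f$; the remaining integral $\int t^{-n/2}e^{-c|x-y|^2/t}\,dy$ is a dimensionless constant, yielding the desired power of $t$.

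For the spatial Hölder seminorm I would change variables and exploit $\int\Gamma(z,t)\,dz=1$ to write
\begin{equation*}
u(x,t)-u(y,t)=\int_{\mathbb R^n}\Gamma(z,t)\bigl[f(x-z)-f(y-z)\bigr]\,dz,
\end{equation*}
which gives $|u(x,t)-u(y,t)|\le \|f\|_{\dot C^{\alpha}}|x-y|^{\alpha}$ immediately. For the half-order time seminorm, take $0<s<t$ and use $u_\tau=\Delta u$ to write $u(x,t)-u(x,s)=\int_s^t \Delta u(x,\tau)\,d\tau$. Since $\int_{\mathbb R^n}\Delta_x\Gamma(x-y,\tau)\,dy=0$, I would subtract a constant and estimate
\begin{equation*}
|\Delta u(x,\tau)|=\Bigl|\int_{\mathbb R^n}\Delta_x\Gamma(x-y,\tau)\bigl[f(y)-f(x)\bigr]\,dy\Bigr|\le c\,\|f\|_{\dot C^\alpha}\,\tau^{\alpha/2-1},
\end{equation*}
where the last bound comes from scaling $\int|\Delta_x\Gamma(z,\tau)||z|^\alpha\,dz=c\,\tau^{\alpha/2-1}$. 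Integrating in $\tau$ produces $c\,\|f\|_{\dot C^{\alpha}}(t^{\alpha/2}-s^{\alpha/2})$, and subadditivity of the concave map $\tau\mapsto\tau^{\alpha/2}$ gives $t^{\alpha/2}-s^{\alpha/2}\le (t-s)^{\alpha/2}$, finishing the bound.

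The main obstacle I anticipate is a purely bookkeeping one: making sure the constants produced by integrating the Gaussian derivatives are genuinely independent of $t$ (that is, that scaling $z\mapsto\sqrt\tau\,z$ absorbs the $\tau$-dependence exactly as expected), and that the subadditivity step $t^{\alpha/2}-s^{\alpha/2}\le (t-s)^{\alpha/2}$ is applied cleanly so the homogeneous seminorm bound does not secretly require an $L^\infty$ norm of $f$. Everything else is a direct calculation with Gaussian estimates, and no cancellation beyond the vanishing-mean trick for $\Delta\Gamma$ is needed.
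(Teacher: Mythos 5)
Your proof is correct for $0<\al<1$, which is the range the paper actually uses, but it takes a genuinely different route from the authors'. They never estimate the H\"older seminorms directly: they prove the two endpoint bounds $\|u\|_{L^\infty}\le c\|f\|_{L^\infty}$ and $\|u\|_{\dot W^{2,1}_\infty(\R\times(0,\infty))}\le c\|f\|_{\dot W^2_\infty(\R)}$ (the latter from $D_tu=\Delta_x u$ and $D_x^2u=\Gamma_t*D_x^2f$ plus Young's inequality) and then invoke the real interpolation identities $(L^\infty(\R),\dot W^2_\infty(\R))_{\al/2}=\dot B^\al_\infty(\R)$ and its anisotropic space--time analogue to conclude for $0<\al<2$, remarking that the argument extends to all $\al>0$. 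Your direct computation --- translation invariance for the spatial increment, and $u(x,t)-u(x,s)=\int_s^t\Delta u(x,\tau)\,d\tau$ combined with the vanishing mean of $\Delta_x\Gamma$ and the scaling bound $\int|\Delta_z\Gamma(z,\tau)|\,|z|^\al\,dz=c\,\tau^{\al/2-1}$ for the temporal one --- is more elementary and self-contained, and every individual step checks out, including the subadditivity $t^{\al/2}-s^{\al/2}\le(t-s)^{\al/2}$, which is valid precisely because $\al/2\le 1$. What the interpolation route buys is the proposition as literally stated, namely for arbitrary $\al>0$: your argument rests on the identification $\dot B^\al_\infty=\dot C^\al$, which fails at $\al=1$ (Zygmund class) and requires modification for $\al\ge1$, so to cover the full range you would either have to differentiate $f$ and iterate your estimates on the derivatives, or fall back on the interpolation argument as the authors do.
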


\begin{prop}
\label{propheat2}
Let $0<\al$.
Let $f\in {B}_\infty^{\alpha,\frac{\al}2  }(\R\times (0,T))$. Define $u(x,t)
=\int^t_0\int_{\R} D_x\Gamma(x-y,t-s)f(y,s)dyds$.
Then $u\in {B}_\infty^{\al+1,\frac{\al}2+\frac{1}{2}  }(\R\times (0,T))$
with
\begin{align*}
\|u\|_{\dot{B}_\infty^{\al+1,\frac{\al}2+\frac{1}{2} }(\R\times (0,T))}&\leq c\|f\|_{\dot{B}_\infty^{\alpha,\frac{\al}2  }(\R\times (0,T))},\\
\|u\|_{\dot{B}_\infty^{\al,\frac{\al}2 }(\R\times (0,T))}&\leq cT^{\frac{1}{2}}\|f\|_{\dot{B}_\infty^{\alpha,\frac{\al}2  }(\R\times (0,T))},\\
\|u\|_{L^\infty(\R\times (0,T))}&\leq cT^{\frac12   }\|f\|_{L^\infty( (0,T);BMO(\R))}.
\end{align*}
Here $BMO(\R)$ denotes the usual BMO space, which is the dual space of Hardy space ${\mathcal H}^1(\R)$.
%{\color{red}{
%Moreover,
%\[
%\|u\|_{L^\infty(\R\times (0,T))}\leq cT^{\frac12  + \frac{\al}2  }\|f\|_{\dot{B}^{\alpha,\frac{\al}{2}}_\infty(\R\times (0,T))}.\]
%
%
%}}

\end{prop}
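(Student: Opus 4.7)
My plan is to introduce $U(x,t) = \int_0^t \int_\R \Ga(x-y, t-s) f(y,s)\, dy\, ds$, so that $U$ solves $U_t - \De U = f$ on $\R\times (0,T)$ with $U|_{t=0}=0$ and $u = D_x U$. All three estimates then reduce to manipulating the explicit representation $u(x,t) = \int_0^t \int_\R D_x \Ga(x-y, t-s)\, f(y,s)\, dy\, ds$ via the cancellation $\int_\R D_x^k \Ga(z, \tau)\, dz = 0$ ($k\geq 1$) combined with the Gaussian bounds $|D_x^k \Ga(z,\tau)| \leq c\,\tau^{-(n+k)/2} e^{-c|z|^2/\tau}$.

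The main obstacle is the first, gain-of-derivative estimate, which is parabolic maximal regularity in the H\"older/Besov scale. I would prove $D_x u = D_x^2 U \in \dot{C}^{\al,\al/2}$ directly: using $\int D_x^2 \Ga(z,\tau)\,dz = 0$ to subtract $f(x,s)$, write $D_x u(x,t) = \int_0^t \int D_x^2 \Ga(x-y, t-s)[f(y,s) - f(x,s)]\, dy\, ds$; then form the spatial difference $D_x u(x,t) - D_x u(x',t)$, split the $s$-integral at the parabolic scale $|t-s| \simeq |x-x'|^2$, and estimate the "near" part via the H\"older seminorm of $f(\cdot,s)$ and Gaussian integrability, and the "far" part via a mean-value bound for $D_x^2 \Ga(x-y,\tau) - D_x^2 \Ga(x'-y,\tau)$. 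The analogous splitting in $s$ relative to the time increment $t-t'$ gives the time-H\"older estimate of $D_x u$; the bound on $U_t = \De U + f$ (hidden inside the anisotropic norm) then follows for free.

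For the second estimate, the same direct argument applies but with only one derivative on $\Ga$. Splitting at $|t-s|\simeq |x-x'|^2$, the "near" contribution is bounded by $[f]_{\al,x} \int_0^{|x-x'|^2} \tau^{(\al-1)/2}\, d\tau \simeq [f]_\al |x-x'|^{1+\al}$, and the "far" contribution by $[f]_\al |x-x'| \int_{|x-x'|^2}^t \tau^{(\al-2)/2}\, d\tau \lesssim [f]_\al |x-x'|\, t^{\al/2}$. After dividing by $|x-x'|^\al$, both pieces are dominated by $c\, T^{1/2} [f]_\al$; in the regime $|x-x'|>T^{1/2}$ the "far" region is empty, and one uses $|x-x'|^{-\al}\leq T^{-\al/2}$ to absorb the "near" part. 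The temporal seminorm of $u$ is handled identically, splitting in $s$ around $t-t'$.

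For the third estimate, I would use $H^1$--$BMO$ duality in the spatial variable. The function $y\mapsto D_x \Ga(x-y,\tau)$ has mean zero, is essentially supported in a ball of radius $\sqrt\tau$, and has $L^\infty$-norm comparable to $\tau^{-(n+1)/2}$, so it is a constant multiple of $\tau^{-1/2}$ times a normalized $H^1(\R)$-atom; hence $\|D_x \Ga(\cdot,\tau)\|_{H^1(\R)} \leq c\,\tau^{-1/2}$. For each fixed $(x,t)$, Hardy--BMO duality then gives $\bigl|\int_\R D_x \Ga(x-y,t-s)\, f(y,s)\, dy\bigr| \leq c\,(t-s)^{-1/2} \|f(\cdot,s)\|_{BMO}$, and integrating in $s\in (0,t)$ yields $|u(x,t)| \leq c\,T^{1/2} \|f\|_{L^\infty((0,T);BMO(\R))}$.
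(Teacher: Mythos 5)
Your third estimate is exactly the paper's: $\|D_x\Gamma(\cdot,\tau)\|_{{\mathcal H}^1}\leq c\,\tau^{-1/2}$ plus ${\mathcal H}^1$--$BMO$ duality in the spatial variable, integrated in $s$. For the first two estimates you take a genuinely different route. The paper does not do any pointwise kernel analysis: for the gain-of-derivative bound it invokes the known space-time convolution estimate $\|\Gamma*_{x,t}\tilde f\|_{\dot B^{\al+2,\al/2+1}_\infty(\R\times{\mathbb R})}\leq c\|\tilde f\|_{\dot B^{\al,\al/2}_\infty(\R\times{\mathbb R})}$, which forces an extension of $f$ from $(0,T)$ to all of ${\mathbb R}$ in time; this in turn requires splitting into the cases $f|_{t=0}=0$ and $f|_{t=0}\neq 0$, the latter handled by subtracting the caloric extension $\Gamma_s*(f|_{t=0})$ and estimating the resulting term $t\,D_y\Gamma_t*(f|_{t=0})$ separately via Proposition \ref{proheat1}. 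For the $T^{1/2}$ estimate the paper interpolates between the $L^\infty$--$BMO$ bound and a $\dot W^{2,1}_\infty\to\dot W^{2,1}_\infty$ bound obtained from Young's inequality with $\|D_x\Gamma(\cdot,\tau)\|_{L^1}\leq c\tau^{-1/2}$. Your direct Schauder-type computation (insert $f(x,s)$ via the cancellation $\int D_x^k\Gamma\,dz=0$, split the $s$-integral at the parabolic scale, mean-value bound on the far part) avoids both the time-extension issue and the interpolation machinery, and your case analysis $|x-x'|\lessgtr T^{1/2}$ for the second estimate is correct; what you lose is that the argument as written is confined to $0<\al<1$, whereas the proposition is stated for all $\al>0$ (the paper also only proves $0<\al<2$ directly and then asserts the extension to general $\al$, so you should at least add the analogous remark, e.g.\ by differentiating in $x$). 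One small inaccuracy: the bound on $U_t$ does not ``follow for free'' nor is it needed, since by the paper's definition the $\dot C^{1+\al,(1+\al)/2}$ seminorm of $u$ involves only $[D_xu]_\al$ (spatial increments to power $\al$ and temporal increments to power $\al/2$), which is exactly what your splitting controls.
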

%See Appendix \ref{appen2} and Appendix \ref{appen3} for the proof of the Proposition \ref{proheat1} and Proposition \ref{propheat2}, respectively.

\begin{prop}
\label{propriesz}

Let  $ \al \in {\mathbb R}$.
% and $k \in {\mathbb N}\cup\{0\}$.
Then
\[
\|Rf\|_{\dot{B}^{\al}_\infty({\mathbb R}^n)}\leq c\|f\|_{\dot{B}^{\al}_{\infty}({\mathbb R}^n)},\
\|Rf\|_{BMO(\R)}\leq c\|f\|_{BMO(\R)},\ \|Rf\|_{{\mathcal H}^1(\R)}\leq c\|f\|_{{\mathcal H}^(\R)}.%, \ %& \qquad
%\|Rf\|_{\dot{B}^{\al,\frac{\al}2  }_{\infty}({\mathbb R}^n\times (0,T))}\leq c\|f\|_{\dot{B}^{\al,\frac{\al}2  }_{\infty}({\mathbb R}^n\times (0,T))},\\
%\|D^k_x f\|_{\dot {B}^{\al }_{\infty}({\mathbb R}^n)}\leq c\|f\|_{\dot {B}^{\al+k}_{\infty}({\mathbb R}^n)}, %& \qquad
%\|D^k_x f\|_{\dot {B}^{\al,\frac{\al}2  }_{\infty}({\mathbb R}^n\times (0,T))}\leq c\|f\|_{\dot {B}^{\al +k,\frac{\al}2   +\frac12 k }_{\infty}({\mathbb R}^n\times (0,T))}.
\]

Moreover, if $0<\al$, then
\[
\|Rf\|_{\dot{B}_\infty^{\al,\frac{\al}2  }
({\mathbb R}^n\times (0,T))}\leq c\|f\|_{\dot{B}_\infty^{\al,\frac{\al}2  }({\mathbb R}^n\times (0,T))}.
%,\\
%\|D^k_x f\|_{\dot {B}^{\al }_{\infty}({\mathbb R}^n)}\leq c\|f\|_{\dot {B}^{\al+k}_{\infty}({\mathbb R}^n)}, & \qquad
%\|D^k_x f\|_{\dot {B}^{\al,\frac{\al}2  }_{\infty}({\mathbb R}^n\times (0,T))}\leq c\|f\|_{\dot {B}^{\al +k,\frac{\al}2   +\frac12 k }_{\infty}({\mathbb R}^n\times (0,T))}.
\]

\end{prop}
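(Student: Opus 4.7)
The plan is to view each Riesz transform $R_j$ as a convolution operator whose Fourier symbol $-i\xi_j/|\xi|$ is smooth and homogeneous of degree zero on $\R\setminus\{0\}$, i.e.\ as a classical Calder\'on--Zygmund operator, and then to read off all four estimates from Littlewood--Paley characterizations of the relevant function spaces combined with standard Fourier multiplier theory.

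For the homogeneous Besov bound I would use the equivalence
\[
\|f\|_{\dot{B}^{\alpha}_{\infty}(\R)}\simeq\sup_{k\in\mathbb{Z}}2^{k\alpha}\|\Delta_{k}f\|_{L^{\infty}(\R)},
\]
where $\widehat{\Delta_{k}f}(\xi)=\varphi(2^{-k}\xi)\hat f(\xi)$ for a standard dyadic bump $\varphi$ supported in the annulus $\{|\xi|\sim 1\}$. Since $R_j$ is itself a Fourier multiplier, it commutes with $\Delta_k$, and
\[
\Delta_{k}R_{j}f=K_{k}\ast\Delta_{k}f,
\]
where $K_k$ is the inverse Fourier transform of $\varphi(2^{-k}\xi)(-i\xi_j/|\xi|)$. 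A rescaling shows $\|K_{k}\|_{L^{1}(\R)}$ is independent of $k$ (the product symbol is Schwartz and supported in $\{|\xi|\sim 2^{k}\}$), so Young's inequality gives $\|\Delta_{k}R_{j}f\|_{\infty}\leq C\|\Delta_{k}f\|_{\infty}$ uniformly in $k$, which yields the Besov estimate. The $BMO(\R)$ and $\mathcal{H}^1(\R)$ bounds are classical Fefferman--Stein results and would simply be cited from \cite{St}.

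For the anisotropic estimate on $\R\times(0,T)$, the key observation is that $R_j$ acts only in the spatial variable and therefore commutes with every temporal frequency projection. Using the anisotropic Littlewood--Paley characterization
\[
\|u\|_{\dot{B}^{\alpha,\alpha/2}_{\infty}(\R\times(0,T))}\simeq\sup_{k,l\in\mathbb{Z}}2^{k\alpha+l\alpha/2}\|\Delta_{k}^{x}\Delta_{l}^{t}u\|_{L^{\infty}(\R\times(0,T))},
\]
one has $\Delta_{k}^{x}\Delta_{l}^{t}R_{j}u=R_{j}(\Delta_{k}^{x}\Delta_{l}^{t}u)=K_{k}\ast_{x}(\Delta_{k}^{x}\Delta_{l}^{t}u)$ with the same kernel $K_k$ as in the spatial case, giving $\|\Delta_{k}^{x}\Delta_{l}^{t}R_{j}u\|_{\infty}\leq C\|\Delta_{k}^{x}\Delta_{l}^{t}u\|_{\infty}$ uniformly in both $k$ and $l$, which is the desired bound.

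The one step requiring some care is the uniform $L^{1}$ bound on the kernels $K_{k}$, but this is a one-line dimensional analysis from the Schwartz character of the symbol. No genuine obstacle is expected in this proposition: it essentially amounts to verifying that standard Littlewood--Paley machinery applies to the norms defined in Section \ref{notation}, which is consistent with the paper's earlier remark that such results ``can be shown by the same arguments as in \cite{BL,St,Tr,Triebel,Triebel1,Triebel2}.''
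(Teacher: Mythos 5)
Your treatment of the purely spatial estimates is essentially the paper's: the $BMO({\mathbb R}^n)$ and ${\mathcal H}^1({\mathbb R}^n)$ bounds are cited as classical singular-integral theory, and the $\dot B^{\alpha}_\infty({\mathbb R}^n)$ bound via commuting $R_j$ with Littlewood--Paley blocks and a uniform $L^1$ bound on the localized kernels is the standard multiplier argument the paper also invokes by citation (to \cite{giga}, \cite{sawada}, \cite{St}). The same argument on the full space-time ${\mathbb R}^n\times{\mathbb R}$ gives the anisotropic bound there, which is also the paper's first step.

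The gap is in the passage from ${\mathbb R}^n\times{\mathbb R}$ to ${\mathbb R}^n\times(0,T)$. The characterization you invoke, $\|u\|_{\dot B^{\alpha,\alpha/2}_\infty({\mathbb R}^n\times(0,T))}\simeq\sup_{k,l}2^{k\alpha+l\alpha/2}\|\Delta_k^x\Delta_l^t u\|_{L^\infty({\mathbb R}^n\times(0,T))}$, is not meaningful as stated: $\Delta_l^t$ is a convolution in time and cannot be applied to a function known only on $(0,T)$ without first extending it, and the norm the paper actually works with on ${\mathbb R}^n\times(0,T)$ is the intrinsic H\"older-type seminorm. The entire content of the paper's proof of the last estimate is precisely this extension step: if $f|_{t=0}=0$ one extends $f$ to ${\mathbb R}^n\times{\mathbb R}$ with comparable norm and applies the full-line bound; if $f|_{t=0}\neq 0$ one writes $f=F+\Gamma_t*(f|_{t=0})$ with $F|_{t=0}=0$, treats $F$ as before, and bounds $R\bigl(\Gamma_t*(f|_{t=0})\bigr)=\Gamma_t*\bigl(R(f|_{t=0})\bigr)$ using the caloric-extension estimate of Proposition \ref{proheat1} together with the trace inequality $\|f|_{t=0}\|_{\dot B^{\alpha}_\infty({\mathbb R}^n)}\le c\|f\|_{\dot B^{\alpha,\alpha/2}_\infty({\mathbb R}^n\times(0,T))}$. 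Both of these inequalities require $\alpha>0$, which is exactly why the proposition restricts the anisotropic estimate to $\alpha>0$; your argument never uses $\alpha>0$, which is a sign it cannot be complete as written. To close the gap you must either supply the extension and trace argument on the finite time interval or rigorously justify an intrinsic Littlewood--Paley characterization of $\dot B^{\alpha,\alpha/2}_\infty({\mathbb R}^n\times(0,T))$ compatible with the H\"older norm used elsewhere in the paper.
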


%By the same reasoning we have the following estimates.
%\begin{prop}
%\label{propriesz2}
%Let  $\al \in {\mathbb R}$ and $s> 0$. Then
%\[
%\|Rf\|_{\dot{B}^{\al,\frac{\al}2  }_{\infty}({\mathbb R}^n\times (0,T))}\leq c\|f\|_{\dot{B}^{\al,\frac{\al}2  }_{\infty}({\mathbb R}^n\times (0,T))}.
%\]
%and
%\[
%\|(-\Delta)^{\frac{\al}2  }Rf\|_{\dot {B}^{\al,\frac{\al}2  }_{\infty}({\mathbb R}^n\times (0,T))}\leq c\|f\|_{\dot {B}^{\al +s,\frac{\al}2   +\frac12 s }_{\infty}({\mathbb R}^n\times (0,T))}.
%\]
%
%\end{prop}

\begin{prop}\label{proppoisson2}
%\label{proppoisson}
Let $0<\al$.
Define $Pf(x,t)=\int_{\Rn}\frac{x_n}{(|x'-y'|^2+x_n^2)^{\frac{n}{2}}}f(y',t)dy'.$
%For  $f\in B_\infty^\al(\Rn)$,
%$
%u\in {B}_\infty^{\al,\frac{\al}{2}}(\R_+\times (0,T))
%$
%with
Then
\begin{align*}%\label{p3-0}
\|Pf(t)\|_{\dot{B}_\infty^{\al}(\R_+)}&\leq c\|f(t)\|_{\dot{B}_\infty^\al(\Rn)},\\
% \label{p3-1}
\|Pfu(t)\|_{L^\infty(\R_+)}&\leq c\|f(t)\|_{L^\infty(\Rn)},\\%.
%\end{align*}
%Moreover, %for $\al>0$
%% \label{p3-2}
%\begin{align*}
\| Pf \|_{\dot{B}_\infty^{\al,\frac{\al}2  }(\R_+\times (0,T))}&\leq c\|f\|_{\dot{B}_\infty^{\al,\frac{\al}2  }(\Rn \times (0,T))}.
\end{align*}

%\end{prop}
%The above is  well known theory of Poisson operator.

%Using the  estimate of Proposition \ref{proppoisson} we obtain the following estimate.
%\begin{prop}
%Let $\al>0$.
%Let $u(x,t)=\int_{\Rn}\frac{x_n}{(|x'-y'|^2+x_n^2)^{\frac n2   }}f(y',t)dy'$ for $f\in  B_\infty^{\al,\frac{\al}2  }(\Rn\times (0,T))$.
%Then
%$
% u \in  B_\infty^{\al,\frac{\al}2  }(\R_+\times (0,T))
%$
%with
Moreover, $u(t)$ is smooth in $\R$ with
\[
%\label{p3-2}
\sup_{x\in \R}x_n^k|D^{k}_{x}u(x,t)|\leq c\|f(t)\|_{L^\infty(\Rn)}.
\]
%\| u \|_{\dot{B}^{\al,\frac{\al}2  }_\infty(\R_+\times (0,T))}\leq c\|f\|_{\dot{B}_\infty^{\al,\frac{\al}2  }(\Rn \times (0,T))}%, \qquad
%\|u \|_{L^\infty(\R_+\times (0,T))}\leq c\|f\|_{L^\infty(\Rn\times (0,T))}
%.
%\]

\end{prop}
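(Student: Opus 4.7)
The plan is to work directly from the explicit Poisson kernel $P(y',x_n) = c_n x_n/(|y'|^2+x_n^2)^{n/2}$, exploiting two elementary facts: (i) after normalization, $\int_{\mathbb{R}^{n-1}} P(y',x_n)\,dy'$ is a constant independent of $x_n > 0$, and (ii) the pointwise derivative bounds
\[
|D^k_{x'} D^\ell_{x_n} P(y',x_n)| \le C_{k,\ell}(|y'|^2 + x_n^2)^{-(n-1+k+\ell)/2}
\]
hold for all multi-indices. Item (i) yields the $L^\infty$ bound immediately, and differentiating under the integral using (ii), followed by the rescaling $y' = x' + x_n z'$, gives the smoothness bound $x_n^k |D^k_x Pf(x,t)| \le c\|f(t)\|_{L^\infty(\mathbb{R}^{n-1})}$ stated at the end of the proposition.

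Next I would handle the spatial H\"older seminorm at fixed $t$ for $0<\al<1$ by splitting $Pf(x,t)-Pf(\bar x,t)$ into a horizontal shift $Pf(x',x_n,t)-Pf(\bar x',x_n,t)$ and a vertical shift $Pf(\bar x',x_n,t)-Pf(\bar x',\bar x_n,t)$. For the horizontal piece, translation invariance in $y'$ rewrites it as $\int P(y',x_n)[f(x'-y',t)-f(\bar x'-y',t)]\,dy'$, which is bounded by $c[f(\cdot,t)]_\al|x'-\bar x'|^\al$ via (i). For the vertical piece, I would use that $\int [P(\bar x'-y',x_n) - P(\bar x'-y',\bar x_n)]\,dy' = 0$ to insert $-f(\bar x',t)$ inside the integral, bound $|f(y',t)-f(\bar x',t)| \le [f(\cdot,t)]_\al |y'-\bar x'|^\al$, and split the $y'$-integration at radius $|x_n-\bar x_n|$, applying (ii) and the mean value theorem in the normal variable on the far zone. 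For general $\al \ge 1$, induction handles the higher-order terms: tangential derivatives commute via $\partial_{x_j'} Pf = P(\partial_{x_j'} f)$, while normal derivatives can be traded for tangential ones using $\Delta_x Pf = 0$, reducing matters to the base case.

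The anisotropic estimate is the cheap step, because the kernel does not depend on $t$: applying the $L^\infty$ bound to $f(\cdot,t)-f(\cdot,s)$ produces $|Pf(x,t)-Pf(x,s)| \le c[f]_{\al/2,t}|t-s|^{\al/2}$ uniformly in $x$, and the spatial H\"older seminorm is uniform in $t$ by the previous paragraph, so summing yields the desired $\dot{B}^{\al,\al/2}_\infty$ bound. The main obstacle is the vertical-shift estimate, which requires careful near-field/far-field bookkeeping to ensure the kernel-difference integral converges near $y' = \bar x'$; once this is settled, every other piece of the statement is either immediate from property (i) or a routine commutation of $P$ with differentiation.
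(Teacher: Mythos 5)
Your proposal is correct, but it follows a genuinely different route from the paper. The paper disposes of the spatial estimate by citing the standard boundedness of the Poisson operator on $\dot{B}^{\al}_\infty$ (Stein), and obtains the anisotropic estimate abstractly: it proves the endpoint bounds $\|Pf\|_{L^\infty}\le c\|f\|_{L^\infty}$ and $\|Pf\|_{\dot{W}^{2,1}_\infty}\le c\|f\|_{\dot{W}^{2,1}_\infty}$ (the latter via the commutations $D^2_{x'}Pf=P(D^2_{x'}f)$, $D_{x_n}^2Pf=-\Delta_{x'}Pf$, $D_tPf=P(D_tf)$) and then applies real interpolation, identifying $(L^\infty,\dot{W}^{2,1}_\infty)_{\al/2}$ with $\dot{B}^{\al,\al/2}_\infty$ to cover $0<\al<2$ at once, extending to all $\al>0$ by the same commutations. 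You instead compute the H\"older seminorms directly from the kernel, with the horizontal/vertical splitting and the near-field/far-field analysis of the vertical increment; your treatment of the time regularity (apply the $L^\infty$ bound to $f(\cdot,t)-f(\cdot,s)$, since the kernel is $t$-independent) is the same observation the paper encodes in $D_tPf=P(D_tf)$. Your approach is more elementary and self-contained --- it avoids invoking the interpolation identities for homogeneous anisotropic Besov spaces, which the paper uses without proof --- at the cost of the explicit bookkeeping in the vertical-shift estimate, which you have set up correctly (the far-zone integral $\int_{|z'|>\delta}|z'|^{\al-n}\,dz'\sim\delta^{\al-1}$ requires $\al<1$, which is exactly your base case). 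One small caveat: your induction for $\al\ge 1$ reduces to the H\"older base case and therefore handles non-integer $\al$; at integer $\al$ the space $\dot{B}^{\al}_\infty$ is the Zygmund class rather than a Lipschitz space, so you would need a second-difference variant of your base estimate (or a final interpolation) there, whereas the paper's interpolation argument covers integer smoothness automatically. Since the application in the paper only requires $0<\al<1$, this does not affect anything downstream.
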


%\[\|D^k_x f\|_{\dot {B}^{\al }_{\infty}({\mathbb R}^n)}\leq c\|f\|_{\dot {B}^{\al+k}_{\infty}({\mathbb R}^n)},\]
%
%\[\|D^k_x f\|_{\dot {B}^{\al,\frac{\al}2  }_{\infty}({\mathbb R}^n\times (0,T))}\leq c\|f\|_{\dot {B}^{\al +k,\frac{\al}2   +\frac12 k }_{\infty}({\mathbb R}^n\times (0,T))}
%\]
%
%\begin{prop}
%\label{trace1}
%
%Let $u\in L^\infty(\R_+)$ with $ \mbox \, {div} \, u=0.$
%Then
%\[
%\|u_n\|_{L^\infty(\Rn)}\leq c\|u\|_{L^\infty(\R_+)}.\]
%
%\end{prop}
%%It is well known that if $\mbox{div} \,u\in L^2(\Omega), u\in L^2(\Omega)$, then $u\cdot n \in L^2(\partial \Omega)$
%%(see \cite{Galdi}).
%The proof of Proposition \ref{trace1} is given in  Appendix \ref{appen5}.

\subsection{Proof of Theorem \ref{thm1}}
\label{proof.thm3}

Choose  $\tilde{h}\in \dot{C}^{\al}(\R)$ and $\tilde{\mathcal F}\in \dot{C}^{\al,\frac{\al}{2}}(\R\times (0,T))$ which are the extension of $h$ and ${\mathcal F}$, respectively.
Let   $V$, $v$ and $ \phi$ be the corresponding vector fields defined by \eqref{V},  \eqref{small-v}, and \eqref{phi}, respectively, and  %in section \ref{stokesformula}:
%\begin{align}
%  V(x,t)&=\int^t_{0}\int_{\R}\nabla_{x_k}\Gamma(x-y,t-s)[\delta_{ij}\tilde{F}_{ik}+R_iR_j\tilde{F}_{ik}](y,s)dyds,\\
%  v(x,t)&=\int_{\R}\Gamma(x-y,t-s)\tilde{h}(y)dy,\\
% \phi(x,t)&=2\int_{\Rn}N(x'-y',x_n)(g_n(y',t)-v_n(y',0,t)-V_n(y',0,t))dy'.
% \end{align}
%
let $w$ be  defined by \eqref{simple} with $G$ as   \eqref{G}.
%, which the solution of \eqref{maineq1} with boundary data $G$.

$\bullet$
At this step, we will show that $u=v+V+\nabla \phi+w\in \dot{C}^{\al,\frac{\al}{2}}(\R_+\times (0,T)).$

 From Proposition \ref{proheat1} and the property of the extension $\widetilde{h}$
\begin{equation}\label{0917-v}
\| v\|_{\dot{C}^{\al,\frac{\al}{2}}(\R\times (0,T))}
\leq c\|h\|_{\dot{C}^{\al}(\R_+)}.
\end{equation}
From Proposition \ref{propheat2}, Proposition \ref{propriesz} and the property of the extension $\widetilde{f}$
\begin{equation}\label{0917-V}
 \| V\|_{\dot{C}^{\al,\frac{\al}{2}}(\R\times (0,T))} \leq c T^{\frac{1}{2}}\|\delta_{ij}\tilde{F}_{ik}+R_iR_j\tilde{F}_{ik}\|_{\dot{C}^{\al,\frac{\al}{2}}(\R\times (0,T))}
\leq cT^{\frac{1}{2}}\|  {\mathcal F}\|_{\dot{C}^{\al,\frac{\al}{2}}(\R_+\times (0,T))}.
\end{equation}

According to the well known trace theorem, $V,\ v\in \dot{C}^{\al,\frac{\al}{2}}(\R\times (0,T))$  imply $V|_{x_n=0},\quad  v|_{x_n=0}\in \dot{C}^{\al,\frac{\al}{2}}(\Rn\times (0,T))$ with
\begin{equation}\label{0917-1}
%\left\{
\begin{array}{ll}\|V|_{x_n=0}\|_{\dot{C}^{\al,\frac{\al}{2}}(\Rn\times (0,T))}&\leq c\| V\|_{\dot{C}^{\al,\frac{\al}{2}}(\R_+\times (0,T))},\\
  \|v|_{x_n=0}\|_{\dot{C}^{\al,\frac{\al}{2}}(\Rn\times (0,T))}&\leq c\| v\|_{\dot{C}^{\al,\frac{\al}{2}}(\R_+\times (0,T))}.\end{array}
  %\right.
\end{equation}
%By the trace theorem,
Again, according to Proposition \ref{propriesz}, $g_n,\ V|_{x_n=0},\ v|_{x_n=0}\in \dot{C}^{\al,\frac{\al}2  }(\Rn\times (0,T))$ imply
$R'g_n,$ $ R'(v_n|_{x_n=0})$, $R'(V_n|_{x_n=0})\in \dot{C}^{\alpha,\frac{\al}2  }(\Rn\times (0,T))$ with
\begin{align}\label{0917-2}
\begin{array}{ll}\|R'g_n\|_{ \dot{C}^{\al,\frac{\al}{2}}(\Rn\times (0,T))  }&\leq c\|g_n\|_{ \dot{C}^{\al,\frac{\al}{2}}(\Rn\times (0,T))  },
\\
%\label{0917-22}
 \|R'(v_n|_{x_n=0})\|_{ \dot{C}^{\al,\frac{\al}{2}}(\Rn\times (0,T)) }&\leq c\|v_n|_{x_n=0}\|_{ \dot{C}^{\al,\frac{\al}{2}}(\Rn\times (0,T)) },\\
 %\label{0917-222}
  \|R'(V_n|_{x_n=0})\|_{ \dot{C}^{\al,\frac{\al}{2}}(\Rn\times (0,T))}&\leq c\|V_n|_{x_n=0}\|_{ \dot{C}^{\al,\frac{\al}{2}}(\Rn\times (0,T))}.\end{array}
\end{align}

Observe that
\[
D_{x_n}\phi=2\int_{\Rn}D_{x_n}N(x'-y',x_n)\Big(g_n(y',t)-v_n(y',0,t)-V_n(y',0,t)\Big)dy',\]
\[
D_{x'}\phi=2\int_{\Rn}D_{x_n}N(x'-y',x_n)\Big(R'g_n(y',t)-R'v_n(y',0,t)-R'V_n(y',0,t)\Big)dy'.
\]
According to the Proposition \ref{propriesz} and  Proposition \ref{proppoisson2}, \eqref{0917-1} and \eqref{0917-2} imply $\nabla \phi\in \dot{C}^{\al,\frac{\al}2  }(\R_+\times(0,T))$ with
\begin{equation}\label{0917-phi}
\|\nabla \phi\|_{\dot{C}^{\al,\frac{\al}{2}}(\R_+\times(0,T))}
\leq c \Big(\|g_n\|_{\dot{C}^{\al,\frac{\al}{2}}(\Rn\times(0,T))}
+\|v_n\|_{\dot{C}^{\al,\frac{\al}{2}}(\R\times(0,T))}
 +\|V_n\|_{\dot{C}^{\al,\frac{\al}{2}}(\R\times(0,T))} \Big).%\\
% &\leq c \Big(\|g\|_{\dot{B}^{\al,\frac{\al}2  }_{\infty}(\Rn\times(0,T))}
%+\|h\|_{\dot{B}^{\al  }_{\infty}(\R_+ )}  +T^\frac12 \|F \|_{\dot{B}^{\al,\frac{\al}2  }_{\infty}(\R_+\times(0,T))} \Big).
\end{equation}

\eqref{0917-1} and \eqref{0917-2} also imply $G\in \dot{C}^{\al,\frac{\al}{2}}(\R_+\times (0,T))$ with
\begin{equation}
\label{0917-2222}
\|G\|_{\dot{C}^{\al,\frac{\al}{2}}(\Rn\times (0,T))}
 \leq c\Big(\|v\|_{\dot{C}^{\al,\frac{\al}{2}}(\R\times (0,T))}+\|V\|_{\dot{C}^{\al,\frac{\al}{2}}(\R\times (0,T))}+\|g\|_{\dot{C}^{\al,\frac{\al}{2}}(\Rn\times (0,T))} \Big).%\\
%&\leq c \Big(\|g\|_{\dot{B}^{\al,\frac{\al}2  }_{\infty}(\Rn\times(0,T))}
%+\|h\|_{\dot{B}^{\al  }_{\infty}(\R_+ )}
% +T^\frac12 \|F \|_{\dot{B}^{\al,\frac{\al}2  }_{\infty}(\R_+\times(0,T))} \Big).
\end{equation}
Applying  Theorem \ref{Rn-1},   we have
\begin{equation}\label{0917-w}
\| w\|_{\dot{C}^{\al,\frac{\al}{2}}({\mathbb R}^n_+\times (0,T))}
\leq c\|G\|_{\dot{C}^{\al,\frac{\al}{2}}({\mathbb
R}^{n-1} \times (0,T))}%\\
%&\leq c \Big(\|g\|_{\dot{B}^{\al,\frac{\al}2  }_{\infty}(\Rn\times(0,T))}
%+\|h\|_{\dot{B}^{\al  }_{\infty}(\R_+ )}
% +T^\frac12 \|F \|_{\dot{B}^{\al,\frac{\al}2  }_{\infty}(\R_+\times(0,T))} \Big)
.
 \end{equation}

Combining  \eqref{0917-v}, \eqref{0917-V}, \eqref{0917-phi} and \eqref{0917-w} together with \eqref{0917-2222}, we conclude that $u=v+V+\nabla \phi+w\in \dot{C}^{\al,\frac{\al}{2}}(\R_+\times (0,T))$   with the inequality %\eqref{es1}.
  \begin{align}
  \label{es2-2}
\| u\|_{\dot{C}^{\al,\frac{\al}2 }_{\infty}({\mathbb R}^n_+\times (0,T))}
 \leq c\Big( \|h\|_{\dot{C}^{\alpha}(\R_+)}+T^\frac12 \| {\mathcal F}\|_{\dot{C}^{\alpha,\frac{\al}2 }_\infty(\R_+\times (0,T))}
+\|g\|_{\dot{C}^{\alpha,\frac{\al}2  }(\Rn\times (0,T))}\Big).
 \end{align}
%Moreover, if $R'\tilde{h}_n\in L^\infty(\R)$ and $R'g_n\in L^\infty(\Rn\times (0,T))$, then
%  and
%  \begin{align}\label{es2}
%\notag  \| u\|_{L^{\infty}({\mathbb R}^n_+\times (0,T))}
%& \leq c \Big(\|h\|_{L^\infty(\R_+)}
% +{T}^{\frac12    }\|  {\mathcal F}\|_{ {C}^{\al,\frac{\al}2 }_\infty(\R_+\times (0,T))}
%+\|g\|_{L^\infty(\Rn\times (0,T))}\\
%& \quad  +  \|R'{h}_n  \|_{   L^\infty(  \R )} +c\|R'g_n\|_{L^\infty(\Rn\times (0,T))}\Big),\ T\leq 1.
% \end{align}

$\bullet$ At this step, we will show that $u=v+V+\nabla \phi+w\in L^\infty(\R_+\times (0,T))$.

By Proposition \ref{propheat2}, we have
\begin{align}\label{0917-V-infty}
\|  V\|_{L^\infty(\R\times (0,T))} & \leq c  T^{\frac{1}{2}}\|\delta_{ij}\tilde{F}_{ik}+R_iR_j\tilde{F}_{ik}\|_{L^\infty(0,T;BMO(\R))}  \leq c T^{\frac{1}{2}}\|{\mathcal F}\|_{L^\infty(\R_+\times (0,T))},
\end{align}
and
by Proposition \ref{proheat1} we have
\begin{align}\label{0917-v-infty}
\| v\|_{L^\infty(\R\times (0,T))}
\leq c\|h\|_{L^\infty(\R_+)}.
\end{align}

To show  that $\nabla\phi,\  w$ are in $ L^\infty(\R_+\times (0,T))$, it is necessary that  $v_n|_{x_n=0},$ $ V_n|_{x_n=0}, $ $R'(v_n|_{x_n=0}), $ $R'(V_n|_{x_n=0})$ are  in $L^\infty(\R_+\times (0,T))$.
%
%By \eqref{0917-V}}-\eqref{0917-V-infty} and \eqref{0917-v}-\eqref{0917-v-infty},  $V|_{x_n=0}, v|_{x_n=0}\in B^{\al,\frac{\al}{2}}_\infty(\Rn\times (0,T))\hookrightarrow L^\infty(\Rn\times (0,T))$ with
%\begin{align}\label{0912}
% & % V_n(t)|_{x_n=0}\in L^\infty(\Rn)\mbox{ with }
%\|V|_{x_n=0}\|_{B^{\al,\frac{\al}{2}}_\infty(\Rn\times (0,T))}\leq c\|V\|_{B^{\al,\frac{\al}{2}}_\infty(\R_+\times (0,T))},\
%%\leq cT^{\frac{1}{2}}\|\tilde{ F}\|_{L^\infty(\R\times (0,T))},\\
%& % v_n(t)|_{x_n=0}\in L^\infty(\Rn)\mbox{ with }
%\|v|_{x_n=0}\|_{B^{\al,\frac{\al}{2}}_\infty(\Rn\times (0,T))}\leq c\|v\|_{B^{\al,\frac{\al}{2}}_\infty(\R_+\times (0,T))}
%%\leq c\|\tilde{h}\|_{L^\infty(\R)}
%.
%\end{align}
%Hence,  $V|_{x_n=0}, v|_{x_n=0}\in B^{\al,\frac{\al}{2}}_\infty(\Rn\times (0,T))$
%In particular, by Proposition \ref{trace1}, %$V_n(t)|_{x_n=0}, v|_{x_n=0}\in B^{\al,\frac{\al}{2}}_\infty(\Rn\times (0,T))\hookrightarrow L^\infty(\Rn\times (0,T))$ with
%\begin{align}\label{0912-2}
% & % V_n(t)|_{x_n=0}\in L^\infty(\Rn)\mbox{ with }
%\|V_n|_{x_n=0}\|_{L^\infty(\Rn\times (0,T))}\leq c\|V\|_{L^\infty(\R_+\times (0,T))},\
%%\leq cT^{\frac{1}{2}}\|\tilde{ F}\|_{L^\infty(\R\times (0,T))},\\
%& % v_n(t)|_{x_n=0}\in L^\infty(\Rn)\mbox{ with }
%\|v_n|_{x_n=0}\|_{L^\infty(\Rn\times (0,T))}\leq c\|v\|_{L^\infty(\R_+\times (0,T))}
%%\leq c\|\tilde{h}\|_{L^\infty(\R)}
%.
%\end{align}
Observe that from Proposition \ref{proppoisson2},
\begin{align*}
V(x',0,t)&=\int^t_0\int_{\R}D_{x_k} \Gamma(x'-y',y_n,t-s) [\delta_{ij}\tilde{F}_{ik}+R_iR_j\tilde{F}_{ik}](y,s)dyds\\
&\leq c\int^t_0\|D_{x_k} \Gamma(x'-\cdot,\cdot,t-s)\|_{{\mathcal H}^1(\R)}\|\delta_{ij}\tilde{F}_{ik}+R_iR_j\tilde{F}_{ik}](\cdot,s)\|_{BMO(\R)}ds,%\leq cT^{\frac{1}{2}}|\mathcal{F}\|_{L^\infty(\R_+\times(0,T))},
%and
\\
v(x',0,t)&=\int_{\R}\Gamma(x'-y',y_n,t)\tilde{h}(y)dy\leq c\| \Gamma(x'-\cdot,\cdot,t)\|_{L^1(\R)}\|\tilde{h}\|_{L^\infty(\R)}.%\leq c\|h\|_{L^\infty(\R_+)}.
\end{align*}
Hence $V|_{x_n=0},\quad  v|_{x_n=0}\in L^\infty(\Rn\times (0,T))$ with
\begin{equation}\label{0917-11}
\|V|_{x_n=0}\|_{L^\infty(\Rn\times (0,T))}\leq cT^{\frac{1}{2}}\| \mathcal F\|_{L^\infty(\R_+\times (0,T))},\quad  \|v|_{x_n=0}\|_{L^\infty(\Rn\times (0,T))}\leq c\|h\|_{L^\infty(\R_+)}.
\end{equation}

The estimates of $R'(v_n|_{x_n=0}), R'(V_n|_{x_n=0})$ in $L^\infty(\R_+\times (0,T))$ are rather delicate.
Note that ${\mathbb P}\tilde{f}_n=\Delta N*{\mathbb P}\tilde{f}_n=\sum_{l\neq n} D_{x_l}^2 N*{\mathbb P}\tilde{f}_n-\sum_{l\neq n}D_{x_n}D_{x_l}{\mathbb P}\tilde{f}_l%=\sum_{l\neq n} R_l^2{\mathbb P}\tilde{f}_n-\sum_{l\neq n}R_nR_l{\mathbb P}\tilde{f}_l
$, since $\mbox{div }{\mathbb P}\tilde{f}=0$.
%D_{x_n} N*D_{x_n}{\mathbb P}\tilde{f}_n,$ and $D_{x_n}{\mathbb P}\tilde{f}_n=-\sum_{l\neq n}D_{x_l}{\mathbb P}\tilde{f}_l.$
Hence, we have
\begin{align*}
V_n(x',0,t)% =\sum_{l\neq n} \int^t_0 \int_{\R}D_{x_l}\Gamma(x'-y',y_n,t-s)D_{y_l}[N*{\mathbb P}\tilde{f}]_n(y,s)dyds\]\[
% +\sum_{l\neq n}\int^t_0 \int_{\R}D_{y_l}\Gamma(x'-y',y_n,t-s)D_{y_n}[N*{\mathbb P}\tilde{f}]_l(y,s)dyds \]
%\[
&=\sum_{l\neq n} \int^t_0 \int_{\R}D_{x_l}\Gamma(x'-y',y_n,t-s)D_{y_l}D_{y_k}[N*(\delta_{in}+R_iR_n)\tilde{F}_{ki}](y,s)dyds\\
&\quad +\sum_{l\neq n}\int^t_0 \int_{\R}D_{y_l}\Gamma(x'-y',y_n,t-s)D_{y_n}D_{y_k}[N*(\delta_{il}+R_iR_l)\tilde{F}_{ki}]dyds
\\
&=\sum_{l\neq n} \int^t_0 \int_{\R}D_{x_l}\Gamma(x'-y',y_n,t-s)R_lR_k(\delta_{in}+R_iR_n)\tilde{F}_{ki}](y,s)dyds
 \\
 &\quad+\sum_{l\neq n}\int^t_0 \int_{\R}D_{y_l}\Gamma(x'-y',y_n,t-s)R_nR_k(\delta_{il}+R_iR_l)\tilde{F}_{ki}](y,s)dyds.
\end{align*}
Using the above representation, $R'\Big(V_n(t)|_{x_n=0}\Big)$ have the following representation:
\begin{align*}
&R'\Big(V_n(t)|_{x_n=0}\Big) \\
&=\sum_{l\neq n}\int^t_0\int^\infty_0 k(y_n,t-s) \Big(\int_{\Rn}R'D_{x_l}K(x'-y',t-s)R_lR_k(\delta_{in}+R_iR_n)\tilde{F}_{ki})(y',y_n,s)dy'\Big)dy_nds\\
 &\quad+\sum_{l\neq n}\int^t_0\int^\infty_0 k(y_n,t-s) \Big(\int_{\Rn}R'D_{y_l}K(x'-y',t-s)R_nR_k(\delta_{il}+R_iR_l)\tilde{F}_{ki})(y',y_n,s)dy'\Big)dy_nds.
\end{align*}
 Here $K(x',t)=K_t(x')=\frac{1}{(2\pi t)^{\frac{n-1}{2}}}e^{-\frac{|x'|^2}{4t}},\
 k(x_n,t)=k_t(x_n)=\frac{1}{\sqrt{2\pi t}}e^{-\frac{x_n^2}{4t}}.$
Note that
\begin{align*}
&\int_{\Rn}R'D_{x_l}K(x'-y',t-s)R_lR_k(\delta_{in}+R_iR_n)\tilde{F}_{ki})(y',y_n,s)dy'\\
 &\leq c\|D_{x_l}K(x'-\cdot,t-s)\|_{\dot{B}_1^{-\al}(\Rn)}\|R_lR_k(\delta_{ij}+R_iR_j)\tilde{F}_{ki})(\cdot,y_n,s)\|_{\dot{B}_\infty^{\al}(\Rn)}\\ & \leq c\|K(x'-\cdot,t-s)\|_{\dot{B}_1^{1-\al}(\Rn)}\sup_{y_n}\|R_lR_k(\delta_{ij}+R_iR_j)\tilde{F}_{ki})(\cdot,y_n,s)\|_{\dot{B}_\infty^{\al}(\Rn)}\\
 & \leq c(t-s)^{-\frac{1}{2}+\frac{\al}2  }\|R_lR_k(\delta_{ij}+R_iR_j)\tilde{  F}(\cdot,s)\|_{\dot{B}_\infty^{\al}(\R)}\leq c(t-s)^{-\frac{1}{2}+\frac{\al}2  }\|\tilde{  \mathcal F}(\cdot,s)\|_{\dot{B}_\infty^{\al}(\R)}.
\end{align*}
Here we use the fact that  $R: \dot{B}_\infty^{\al}(\R)\hookrightarrow \dot{B}_\infty^{\al}(\R)$ is bounded operator for $\al\in {\mathbb R}$,
$L^\infty({\mathbb R};\dot{B}_\infty^{\al}(\Rn))\cap L^\infty(\Rn;\dot{B}_\infty^{\al}({\mathbb R}))=\dot{B}_\infty^{\al}(\R)\mbox{ for }\al>0,$
 and $\dot{B}_\infty^{\al}(\R)=\dot{C}^\al(\R)$ for $0<\al<1$.
 Hence, we have
\begin{equation} \label{0912-2}
 \|R'\Big(V_n(t)|_{x_n=0}\Big)\|_{L^\infty(\Rn\times (0,T))}
 %\leq c\sup_{t<T}\int^t_0(t-s)^{-\frac{1}{2}+\frac{\al}2  }ds\sup_{s}\|\tilde{  F}(\cdot,s)\|_{\dot{B}_\infty^{\al}(\R)}
 \leq cT^{\frac12  + \frac{\al}2  }\|\tilde{ \mathcal  F}\|_{\dot{C}^{\al,\frac{\al}2  }(\R\times (0,T))}.
\end{equation}

Direct computation shows the identity
\[
R'\Big(v_n(t)|_{x_n=0}\Big)=\int_{\R}\Gamma(y,t)(R'
\tilde{h}_n)(x'-y',y_n) dy.\]
Hence, if $R'\tilde{h}_n\in L^\infty(\R)$, then
\begin{align}\label{0912-3}
\|R'\Big(v_n(t)|_{x_n=0}\Big)\|_{L^\infty(\Rn\times (0,T))}
\leq c\|R'\tilde{h}_n\|_{L^\infty(\R)}.
\end{align}

By Proposition \ref{proppoisson2}, \eqref{0912-2} and \eqref{0912-3} imply $\nabla \phi\in L^\infty(\R_+\times (0,T))$ with
 \begin{align}
 \label{0917-phi-infty1}
\|D_{x_n}\phi(t)\|_{L^\infty(\R_+)} %& \leq c\|g_n(t)-v_n(t)|_{x_n=0}-V_n(t)|_{x_n=0}\|_{L^\infty(\Rn)} \\
&\leq c  \Big(  \|g_n\|_{L^\infty(\Rn \times (0,T))}+T^\frac12 \|{\mathcal F}\|_{L^\infty(\R_+\times (0,T))}+\|h\|_{L^\infty(\R_+)} \Big),\\
 \|D_{x'}\phi(t)\|_{L^\infty(\R_+)}%& \leq c\|R'(g_n(t)-v_n(t)|_{x_n=0}-V_n(t)|_{x_n=0})\|_{L^\infty(\Rn)} \\
 \label{0917-phi-infty2}
 & \leq c \Big( \|R'g_n(t)\|_{L^\infty(\Rn)}+T^{\frac12 +\frac{\al}2  }\|{\mathcal F}\|_{\dot{C}^{\al,\frac{\al}2  }(\R_+\times (0,T))}+\|R'\tilde{h}_n\|_{L^\infty(\R_+)}\Big).
 \end{align}
%and so we have
% \begin{align}\label{0917-phi-infty}
%\notag  \|\na \phi(t)\|_{L^\infty(\R_+)}
%&\leq c  \Big(  \|g\|_{L^\infty(\Rn \times (0,T))} +\|h\|_{L^\infty(\R_+)}   +T^{\frac12  }\| F\|_{B_\infty^{\al, \frac{\al}2 }(\R_+\times (0,T))}\\
%& \quad + \|R'g_n(t)\|_{L^\infty(\Rn)}+\|R'\tilde{h}_n\|_{L^\infty(\R_+)}\Big)
% \end{align}

\eqref{0912-2} and \eqref{0912-3} also imply  $G \in  L^\infty(\Rn\times (0,T))$ with
\begin{align}
\label{0917-w-infty3}
\notag\|G\|_{L^\infty(\Rn \times (0,T))} &\leq c  \Big(  \|g\|_{L^\infty(\Rn \times (0,T))} +\|h\|_{L^\infty(\R_+)}  +T^\frac12 \|{\mathcal F}\|_{L^\infty(\R_+\times (0,T))} \\
& \quad + T^{\frac{1}{2}+\frac{\al}{2}}\| {\mathcal F}\|_{\dot{C}^{\al,\frac{\al}2  }(\R_+\times (0,T))}+ \|R'g_n(t)\|_{L^\infty(\Rn)}+\|R'\tilde{h}_n\|_{L^\infty(\R_+)}\Big)1.
\end{align}
Note that $G_n=0 $, $G|_{t=0}=0$.
According to the result of  \cite{CC},
 \begin{align}
\label{0917-w-infty}
\| w\|_{L^\infty({\mathbb R}^n_+ \times  (0,T))}  \leq
c\|G\|_{L^\infty({\mathbb R}^{n-1} \times (0,T))}%\\
%\notag &\leq c  \Big(  \|g\|_{L^\infty(\Rn \times (0,T))} +\|h\|_{L^\infty(\R_+)}   +T^{\frac12  }\| F\|_{B_\infty^{\al, \frac{\al}2 }(\R_+\times (0,T))}\\
%& \quad + \|R'g_n(t)\|_{L^\infty(\Rn)}+\|R'\tilde{h}_n\|_{L^\infty(\R_+)}\Big)
.
 \end{align}

 Combining  \eqref{0917-V-infty}, \eqref{0917-v-infty}, \eqref{0917-phi-infty1},  \eqref{0917-phi-infty2} and \eqref{0917-w-infty} together with \eqref{0917-w-infty3}, we conclude that $u=v+V+\nabla \phi+w\in L^\infty(\R_+\times (0,T))$ with the inequality
  %Moreover, if $R'\tilde{h}_n\in L^\infty(\R)$ and $R'g_n\in L^\infty(\Rn\times (0,T))$, then
%  and
  \begin{align}\label{es2-1}
\notag  \| u\|_{L^{\infty}({\mathbb R}^n_+\times (0,T))}
& \leq c \Big(\|h\|_{L^\infty(\R_+)}+T^\frac12 \|{\mathcal F}\|_{L^\infty(\R_+\times (0,T))}
 +{T}^{\frac12+\frac{\al}{2}    }\|  {\mathcal F}\|_{ {C}^{\al,\frac{\al}2 }_\infty(\R_+\times (0,T))}
+\|g\|_{L^\infty(\Rn\times (0,T))}\\
& \quad  +  \|R'\tilde{h}_n  \|_{   L^\infty(  \R )} +\|R'g_n\|_{L^\infty(\Rn\times (0,T))}\Big).
 \end{align}
Combining \eqref{es2-2} and \eqref{es2-1}, we obtain the estimates \eqref{es2} in  Theorem \ref{thm1}.
  The uniqueness follows from \eqref{es2}.
%This completes the  proof of Theorem \ref{thm1}.

\begin{rem}
%Recall that with the pressure $q$  defined by \eqref{simple1}
%\[q(x,t) =  \sum_{j=1}^{n-1}\int_0^t \int_{\Rn} \pi_j(x'-y',x_n,t-s) G_j(y',s) dy'ds.
%\]
Recall that  formally, $u=w+\nabla \phi+v+V$, $p=q-\phi_t+{\mathbb Q}f$ satisfies the Stokes system \eqref{maineq} formally,
%Moreover, recall that
%%\[q(x,t) =  \sum_{j=1}^{n-1}\int_0^t \int_{\Rn} \pi_j(x'-y',x_n,t-s) G_j(y',s) dy'ds.
%%\]
%%Observe that formally, $u=w+\nabla \phi+v$, $p=q-\phi_t+{\mathbb Q}f$
%$w$ satisfies the nonstationary Stokes system \ref{maineq1} with the pressure $q$  defined by \eqref{simple1}.
%
%Theorem \ref{Rn-1} shows that  $w\in C^{\infty,1}((\Rn\times [\delta,\infty))\times [\delta,T))$ for any $\delta>0$.  The representation of $q$ can be rewritten by
and $q$ can be written by $q(x,t)=q_0(x,t)+D_t q_1(x,t)$ (see \cite{koch} for the details),  where $q_0(t), q_1(t)$ are harmonic function in $x$ variable, but their  differentiability in $t$ variable is equal to  the differentiability of $g$ in $t$ variable. Therefore, for $g\in C^{\al,\frac{\al}{2}}(\Rn\times (0,T)), \ 0<\al<1$, $q$ is not a function but a distribution in terms of $t$ variables.
%And recall that   $w\in C^\infty((\Rn\times [\delta,\infty))\times [\delta,T))$ for any$\delta>0$, but
%$q=q_0+\frac{d}{dt}q_1$ and $\phi_t$  is not a function but a distribution in terms of $t$ variables if not $G'_j$ differentiable in $t$.
From this reason, our solution $u\in C^{\al,\frac{\al}{2}}(\R_+\times (0,T))$  satisfies weak formulation of the Stokes system \eqref{maineq}, but cannot satisfy the  Stokes system \eqref{maineq} in classical sense.

\end{rem}

\section{\bf Proof of Theorem \ref{thm3}}

\label{proof.thm3}
\setcounter{equation}{0}
%\subsection{Bilinear estimate.}

%Let $\tilde{f}$ be an extension of $f$ to $\R\times (0,T)$.
%Let ${\mathbb P}$ and ${\mathbb Q}$ be  the  operators  introduced  in section \ref{stokesformula}.% defined by

%Let $\widetilde u, \tilde{v}$ be the extension of $u,v$  to $\R\times (0,T)$ such that
% \[
%\|\tilde{u}\|_{{B}_\infty^{\al,\frac{\al}2 }(\R\times(0,T))}\leq c\|u\|_{{B}_\infty^{\al,\frac{\al}2 }(\R_+\times(0,T))},\\
%\|\tilde{v}\|_{{B}_\infty^{\al,\frac{\al}2 }(\R\times(0,T))}\leq c\|v\|_{{B}_\infty^{\al,\frac{\al}2 }(\R_+\times(0,T))}.
%\]
% Observe that
%\[
% {\mathbb P}\mbox{div}(\tilde{u}\otimes \tilde{v})=
%  D_{x_k}[\delta_{ij}\tilde{u}_{k}\tilde{v}_i+R_iR_j\tilde{u}_{k}\tilde{u}_{i}].
% \]

Since
\[
|(uv)(x,t)-(u v)(y,t)|\leq |v(x,t)||u(x,t)-u(y,t)|+|u(y,t)||v(x,t)-v(y,t)|
\]
and
\[
|(u v)(x,t)-(u v)(x,s)|\leq |v(x,t)||u(x,t)-u(x,s)|+|u(x,s)||v(x,t)-v(x,s)|,
\]
%we have
%\[
%\|uv\|_{\dot{B}_\infty^{\al,\frac{\al}2 }(\R\times (0,T)}\leq c\|{u}\|_{L^\infty(\R\times (0,T)}\|{v}\|_{\dot{B}_\infty^{\al,\frac{\al}2 }(\R\times (0,T)}+\|{v}\|_{L^\infty(\R\times (0,T)}\|{u}\|_{\dot{B}_\infty^{\al,\frac{\al}2 }(\R\times (0,T)}.
%\]
 the following bilinear estimate can be  obtained.
\begin{lemm}
\label{bilinear}

Let $u,v\in C^{\al,\frac{\al}2 }(\R\times[0,T))$.
 Then
\[
\|uv\|_{\dot C^{\al,\frac{\al}2 }(\R\times (0,T))}\leq c\|u\|_{L^\infty(\R\times (0,T))}\|v\|_{\dot{C}^{\al,\frac{\al}2 }(\R\times (0,T))}
+c\|v\|_{L^\infty(\R\times (0,T))}\|u\|_{\dot{C}^{\al,\frac{\al}2 }y(\R\times (0,T))}.\]

\end{lemm}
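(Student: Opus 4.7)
The plan is to exploit the two pointwise identities already displayed just above the statement, which are nothing but the product-rule-type decompositions $u(x,t)v(x,t) - u(y,t)v(y,t) = v(x,t)[u(x,t) - u(y,t)] + u(y,t)[v(x,t) - v(y,t)]$ and its temporal analogue. Since the $k=0$ homogeneous anisotropic Hölder seminorm is
\[
[f]_{\alpha,\mathbb{R}^n\times(0,T)} = \sup_{t}\sup_{x\neq y}\frac{|f(x,t)-f(y,t)|}{|x-y|^{\alpha}} + \sup_{x}\sup_{s\neq t}\frac{|f(x,t)-f(x,s)|}{|t-s|^{\alpha/2}},
\]
and each of the two pointwise inequalities already isolates a Hölder-type difference of one factor multiplied by a pointwise value of the other, the proof reduces to a direct $L^{\infty}$ bound on those pointwise values followed by taking suprema.

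Concretely, first I would take the spatial inequality, bound $|v(x,t)| \leq \|v\|_{L^{\infty}(\mathbb{R}^n\times(0,T))}$ and $|u(y,t)| \leq \|u\|_{L^{\infty}(\mathbb{R}^n\times(0,T))}$, divide both sides by $|x-y|^{\alpha}$, and take $\sup_{t}\sup_{x\neq y}$ to obtain
\[
\sup_{t}\sup_{x\neq y}\frac{|(uv)(x,t)-(uv)(y,t)|}{|x-y|^{\alpha}} \leq \|v\|_{L^{\infty}}\,[u]^{\mathrm{sp}}_{\alpha} + \|u\|_{L^{\infty}}\,[v]^{\mathrm{sp}}_{\alpha},
\]
where $[\cdot]^{\mathrm{sp}}_{\alpha}$ denotes the spatial part of the seminorm. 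The analogous step applied to the temporal identity, dividing by $|t-s|^{\alpha/2}$ and taking $\sup_{x}\sup_{s\neq t}$, produces the matching temporal bound with $[\cdot]^{\mathrm{tm}}_{\alpha/2}$. Adding the two, recognizing that $[u]^{\mathrm{sp}}_{\alpha} + [u]^{\mathrm{tm}}_{\alpha/2} = \|u\|_{\dot C^{\alpha,\alpha/2}}$ (and similarly for $v$), yields the claim.

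There is essentially no analytic obstacle in this lemma: no interpolation, no cancellation, and no singular integral operator is invoked; it is a pure consequence of the triangle inequality and $L^{\infty}$-boundedness. The only cosmetic subtlety is that in the term $|u(y,t)|\,|v(x,t)-v(y,t)|$ the pointwise value of $u$ is taken at $y$ rather than $x$, but this is harmless because the sup of $|u|$ over the whole domain dominates both evaluations. The hypothesis that $u,v$ lie in the inhomogeneous space $C^{\alpha,\alpha/2}$ (rather than merely $\dot{C}^{\alpha,\alpha/2}$) is exactly what ensures $\|u\|_{L^{\infty}}, \|v\|_{L^{\infty}} < \infty$ on the right-hand side, so the estimate is meaningful as stated.
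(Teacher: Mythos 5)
Your proposal is correct and is essentially the paper's own argument: the paper displays exactly those two pointwise product decompositions immediately before the lemma and then asserts the estimate follows, which is precisely the bound-by-$L^\infty$-and-take-suprema step you carry out explicitly. No further comment is needed.
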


%\begin{proof}
%
%Since
%\[
%|(uv)(x,t)-(u v)(y,t)|\leq |v(x,t)||u(x,t)-u(y,t)|+|u(y,t)||v(x,t)-v(y,t)|
%\]
%and
%\[
%|(u v)(x,t)-(u v)(x,s)|\leq |v(x,t)||u(x,t)-u(x,s)|+|u(x,s)||v(x,t)-v(x,s)|,
%\]
%we have
%\[
%\|uv\|_{\dot{B}_\infty^{\al,\frac{\al}2 }(\R\times (0,T)}\leq c\|\tilde{u}\|_{L^\infty(\R\times (0,T)}\|\tilde{v}\|_{\dot{B}_\infty^{\al,\frac{\al}2 }(\R\times (0,T)}+\|\tilde{v}\|_{L^\infty(\R\times (0,T)}\|\tilde{u}\|_{\dot{B}_\infty^{\al,\frac{\al}2 }(\R\times (0,T)}.
%\]
%
%%This completes the proof.
%\end{proof}

\subsection{Approximate solutions}

%Let $\tilde{f}$ be an extension of $f$ to $\R\times (0,T)$.
%Let ${\mathbb P}$ and ${\mathbb Q}$ be  the  operators  introduced  in section \ref{stokesformula}.

Let $(u^1,p^1)$ be the solution of the system
\[
\begin{array}{l}\vspace{2mm}
u^1_t - \De u^1 + \na p^1 =0, \qquad div \, u =0, \mbox{ in }
 \R_+\times (0,T),\\
\hspace{20mm}u^1|_{t=0}= h, \qquad  u^1|_{x_n =0} = g.
\end{array}
\]
Let $m\geq 1$.
After obtaining $(u^1,p^1),\cdots, (u^m,p^m)$ construct $(u^{m+1}, p^{m+1})$ which satisfies the system
\[
%\label{maineq5}
\begin{array}{l}\vspace{2mm}
u^{m+1}_t - \De u^{m+1} + \na p^{m+1} =f^m, \qquad div \, u^{m+1} =0, \mbox{ in }
 \R_+\times (0,T),\\
\hspace{30mm}u^{m+1}|_{t=0}= h, \qquad  u^{m+1}|_{x_n =0} =g,
\end{array}
\]
where $f^m=-\mbox{div}(u^m\otimes u^m)$.

\subsection{Uniform boundesness}
Let $T\leq 1$.
By the result of Theorem \ref{thm1},   we have
%\begin{align*}
%\| u^{m+1}\|_{\dot{C}^{\al,\frac{\al}2 }({\mathbb R}^n_+\times (0,T))}
%  &\leq c \Big(\|h\|_{\dot{C}^{\alpha}(\R_+)}+c\|g\|_{\dot{B}^{\alpha,\frac{\al}2  }_\infty(\Rn\times (0,T))} +c T^\frac12 \|(u^m\otimes u^m)\|_{\dot{C}^{\alpha,\frac{\al}2 }(\R_+\times (0,T))} \Big),\\
%\| u^{m+1}\|_{L^{\infty}({\mathbb R}^n_+\times (0,T))}
%  &\leq c \Big(\|h\|_{L^\infty(\R_+)}+\|g\|_{L^\infty(\Rn\times (0,T))}+\|R'g_n\|_{L^\infty(\R\times (0,T))} \\
% &\quad+ T^\frac12 \|(u^m\otimes u^m)\|_{L^\infty(\R_+\times (0,T))}+\|R'\tilde{h}_n\|_{L^\infty(\R)}\Big).
% \end{align*}
\begin{align*}
\| u^{m+1}\|_{{C}^{\al,\frac{\al}2 }({\mathbb R}^n_+\times (0,T))}
  &\leq c \Big(\|h\|_{{C}^{\alpha}(\R_+)}+\|g\|_{{C}^{\alpha,\frac{\al}2  }(\Rn\times (0,T))}+\|R'g_n\|_{L^\infty(\R\times (0,T))}
 %&\quad+ T^\frac12 \|(u^m\otimes u^m)\|_{L^\infty(\R_+\times (0,T))}
  \\
  &\quad +\|R'
 {h}_n\|_{L^\infty(\R)}+\max\{ T^\frac12 ,  T^{\frac{1}{2}+\frac{\al}{2}} \}\|(u^m\otimes u^m)\|_{{C}^{\alpha,\frac{\al}2 }(\R_+\times (0,T))}
%\| u^{m+1}\|_{L^{\infty}({\mathbb R}^n_+\times (0,T))}
\Big).
 \end{align*}
By the bilinear estimate in Lemma \ref{bilinear}, we have
\[
\|(u^m\otimes u^m)\|_{{C}^{\alpha,\frac{\al}2  }(\R_+\times (0,T))}\leq c
\|u^m\|_{L^\infty(\R_+\times (0,T))}
\|u^m\|_{{C}^{\alpha,\frac{\al}2 }(\R_+\times (0,T))}.
\]
Therefore, we have
\begin{align}
\label{m1}
\| u^{m+1}\|_{{C}^{\al,\frac{\al}2 }({\mathbb R}^n_+\times (0,T))}
 &\leq c_1 \Big(\|h\|_{{C}^{\alpha}(\R_+)}+ \|g\|_{{C}^{\alpha,\frac{\al}2  }(\Rn\times (0,T))}+ \|R'g_n\|_{L^\infty(\Rn\times (0,T))}\\
\notag&\quad+ \|R'{h}_n\|_{L^\infty(\R)}+ \max\{T^\frac12, T^{\frac{1}{2}+\frac{\al}{2}} \} \|u^m\|_{L^\infty(\R_+\times (0,T))}\|u^m\|_{{C}^{\alpha,\frac{\al}2  }(\R_+\times (0,T))} \Big).
 \end{align}

Set \[M_0=\|h\|_{{C}^{\alpha}(\R_+)}
 +\|g\|_{{C}^{\alpha,\frac{\al}2  }(\Rn\times (0,T))}+\|R'g_n\|_{L^\infty(\Rn\times (0,T))}+\|R'{h}_n\|_{L^\infty(\R)}.\]
 Choose $M>2c_1M_0$.
 Then \eqref{m1} implies that
 \[
 \| u^{1}\|_{{C}^{\al,\frac{\al}2 }({\mathbb R}^n_+\times (0,T))}\leq c_1M_0<M,
 \]
 and
under the condition that $\| u^m\|_{{C}^{\al,\frac{\al}2 }({\mathbb R}^n_+\times (0,T))}\leq M$, we have
\[
 \| u^{m+1}\|_{{C}^{\al,\frac{\al}2 }({\mathbb R}^n_+\times (0,T))}\leq c_1M_0+c_1\max\{T^\frac12, T^{\frac{1}{2}+\frac{\al}{2}} \} M^2.
 \]
Choose $0<T \leq    \frac{1}{(2c_1M)^2} $ together with the condition $T\leq 1$. Then by mathematical induction argument we can conclude that
\[
 \| u^{m}\|_{{C}^{\al,\frac{\al}2 }({\mathbb R}^n_+\times (0,T))}\leq M\mbox{ for all }m=1,2,\cdots.
 \]

\subsection{Uniform Convergence}
Let $U^m=u^{m+1}-u^m$ and $P^m=p^{m+1}-p^m$.
Then $U^m$ satisfies the system
\[
\begin{array}{l}\vspace{2mm}
U^m_t - \De U^m + \na P^m =-{\mathbb P}\mbox{div}(u^m\otimes U^{m-1}+U^{m-1}\otimes u^{m-1}), \qquad div \, U^{m} =0, \mbox{ in }
 \R_+\times (0,T),\\
\hspace{30mm}U^{m}|_{t=0}= 0, \qquad  U^{m}|_{x_n =0} =0,
\end{array}
\]

By the result of Theorem \ref{thm1}, we have
\begin{align*}
\|U^m\|_{{C}^{\al,\frac{\al}2 }({\mathbb R}^n_+\times (0,T))}
\leq c \max\{T^\frac12, T^{\frac{1}{2}+\frac{\al}{2}} \} \|(u^m\otimes U^{m-1}+U^{m-1}\otimes u^{m-1})\|_{{C}^{\alpha,\frac{\al}2}(\R_+\times (0,T))}\\
\leq c_2\max\{T^\frac12, T^{\frac{1}{2}+\frac{\al}{2}} \} (\|u^m\|_{{C}^{\alpha,\frac{\al}2}(\R_+\times (0,T))}+\|u^{m-1}\|_{{C}^{\alpha,\frac{\al}2}(\R_+\times (0,T))})\|U^{m-1}\|_{{C}^{\alpha,\frac{\al}2}(\R_+\times (0,T))} .
\end{align*}
Choose $0<T\leq \frac{1}{(4c_2M)^2}  $ together with the condition $T \leq    \frac{1}{(2c_1M)^2}$ and $T\leq 1$. Then, the above estimate leads to the
\begin{equation}
\label{m2}
\|U^m\|_{{C}^{\al,\frac{\al}2 }({\mathbb R}^n_+\times (0,T))}\leq \frac{1}{2}\|U^{m-1}\|_{{C}^{\alpha,\frac{\al}2}(\R_+\times (0,T))}.
\end{equation}
\eqref{m2} implies the infinite series  $\sum_{k=1}^\infty U^k$ converges in ${C}^{\alpha,\frac{\al}2}(\R_+\times (0,T))$.
Observe that $u^m=u^1+\sum_{k=1}^nU^{k}, m=2,3,\cdots.$ Hence $u^m$ converges to $u^1+\sum_{k=1}^\infty U^{k}$ in $ {C}^{\alpha,\frac{\al}2}(\R_+\times (0,T))$.
Set $u:=u^1+\sum_{k=1}^\infty U^{k}.$

\subsection{Existence, regularity and uniqueness}

Let $u$ be the same one constructed by the previous section.
In this section, we will show that $u$ satisfies weak formulation of Navier-Stokes system, that is, $u$ is a weak solution of Navier-Stokes system with appropriate distribution $p$.

Let $\Phi\in C^\infty_{0}(\R_+\times (0,T))$ with $\mbox{div }\Phi=0$.
Observe that
\[
-\int^T_0\int_{\R_+} u^{m+1}\cdot \Delta\Phi dxdt=\int^T_0\int_{\R_+}u^m\cdot (\Phi_t+(u^m\cdot \nabla)\Phi) dxdt.\]
Now send $m$ to the infinity, then, since $u^m\rightarrow u$ in ${C}^{\alpha,\frac{\alpha}{2}}(\R_+\times (0,T))$, we have
\begin{equation}
\label{w1}
\int^T_0\int_{\R_+}u\cdot \Delta \Phi dxdt=\int^T_0\int_{\R_+}u\cdot (\Phi_t+(u\cdot \nabla)\Phi) dxdt.\end{equation}
%Therefore,  $u$ satisfies  the  Navier-Stokes system \eqref{maineq2} in the sense of distributions.

%Recalling Serrin's regularity criterion in \cite{serrin}, %that
%%if $u\in L^{q}((t_0-r^2,t_0);L^p(B_{r_0}))$, $\frac{n}{p}+\frac{2}{q}<1$, then $u$ is infinitely differentiable in terms of the space variables in $(t_0-\frac{1}{4}r^2,t_0)\times B_{\frac{1}{2}r_0}$. Therefore,
% we conclude that
%$u$ is infinitely differentiable in terms of the space variables in $\R_+\times (0,T)$.

Note that $u$ can be decomposed by $u=v+V+\nabla \phi+w$, where
\begin{align*}
v(x,t)&=\int_{\R}\Gamma(x-y,t)\tilde{h}(y)dy,\\
V_j(x,t)&=\int^t_{0}\int_{\R}D_{x_k}\Gamma(x-y,t-s)[\delta_{ij}\tilde{u}_{k}\tilde{u}_i+R_iR_j\tilde{u}_{k}\tilde{u}_{i}](y,s)dyds,\\
\phi(x,t)&=2\int_{\Rn}N(x'-y',x_n)\Big(g_n(y',t)-v_n(y',0,t)-V_n(y',0,t)\Big)dy'\\
 w&= \sum_{j=1}^{n-1}\int_0^t \int_{\Rn} K_{ij}( x'-y',x_n,t-s)G_j(y',s) dy'ds,
  \end{align*}
  for $G=(g' -V'|_{x_n=0} -v'|_{x_n=0} - R'(g_n -v_n|_{x_n=0}-V_n|_{x_n=0},0).$
 By Proposition \ref{proheat1},
 $v$ is infinitely differentiable in $(x,t)\in \R_+\times (0,T)$,  by Proposition \ref{propheat2} $V\in C^{\al+1,\frac{\al+1}{2}}(\R_+\times (0,T)),$
 by Proposition \ref{proppoisson2} $\nabla \phi(t)$ is infinitely differentiable in $x \in \R_+$ for each $t>0$, and by Remark  \ref{rem1031}, $w(t)$  is infinitely differentiable in $x\in \R_+$ for each $t>0$, concluding that
 $\nabla u\in L^\infty(K  \times (\delta, T))$  for each $\delta > 0$ and for each compact subset $K$ of $\R_+$.
 Therefore, \eqref{w1}  can be rewritten by
 %$u$ satisfies the weak  formulation
\[
\int^T_0\int_{\R_+}\nabla u:\nabla \Phi dxdt=\int^T_0\int_{\R_+}u\cdot (\Phi_t-(\Phi\cdot \nabla)u) dxdt.\]
This leads to the conclusion that $u$  is a weak solution of the  Navier-Stokes system \eqref{maineq2}.

Let  $ v\in C^{\al,\frac{\al}{2}}(\R_+\times (0,T))$ be  another solution of Naiver-Stokes system \eqref{maineq2} with pressure $q$. Then
 $u-v$ satisfies the system
\begin{align*}
%\begin{array}{c}\vspace{2mm}
(u-v)_t - \De (u-v) + \na (p-q)& =-\mbox{div}(u\otimes (u-v)+(u-v)\otimes v)\mbox{ in }
 \R_+\times (0,T), \\
 div \, (u-v)& =0,
 %\end{array}  \mbox{ in }
 \mbox{ in }\R_+\times (0,T),\\
 (u-v)|_{t=0}= 0, &\quad (u-v)|_{x_n =0} =0.
\end{align*}
Applying  Theorem \ref{thm1} to the above Stokes system for $u-v$ and then applying   Lemma \ref{bilinear},
%????????????
\begin{align*}
\| u-v\|_{{C}^{\al,\frac{\al}2 }({\mathbb R}^n_+\times (0,T_1))}
 \leq c\max\{T_1^\frac12, T_1^{\frac{1}{2}+\frac{\al}{2}}\} \|u\otimes (u-v)+(u-v)\otimes v \|_{{C}^{\alpha,\frac{\al}2 }(\R_+\times (0,T_1))}\\
 \leq c_3\max\{T^\frac12, T^{\frac{1}{2}+\frac{\al}{2}}\}( \|u\|_{L^\infty({\mathbb R}^n_+\times (0,T_1))}+\|v\|_{L^\infty({\mathbb R}^n_+\times (0,T_1))})\| u-v\|_{{C}^{\al,\frac{\al}2 }({\mathbb R}^n_+\times (0,T_1))},\ T_1\leq T.
 \end{align*}
 If we take $T_1\leq \frac{1}{c_3^2(\|u\|_{L^\infty({\mathbb R}^n_+\times (0,T))}+\|v\|_{L^\infty({\mathbb R}^n_+\times (0,T))}+1)^2}$ together with $T_1\leq 1$, then the above inequality leads to the conclusion that
 \[
 \| u-v\|_{{C}^{\al,\frac{\al}2 }({\mathbb R}^n_+\times (0,T_1))}=0\mbox{ that is, }u\equiv v\mbox{ in }\R_+\times (0,T_1).\]
 By the same argument, we can show that
\[
 \| u-v\|_{{C}^{\al,\frac{\al}2 }({\mathbb R}^n_+\times (T_1,2T_1))}=0\mbox{ that is, }u\equiv v\mbox{ in }\R_+\times (T_1,2T_1).\]
% By  Lemma \ref{bilinear},  %we have
%\[
%\|u-v\|_{{B}^{\al,\frac{\al}2 }_{\infty}({\mathbb R}^n_+\times (0,T))}
%%\leq c T^\frac12 \|(u\otimes (u-v)+(u-v)\otimes v)\|_{{B}^{\alpha,\frac{\al}2}_\infty(\R_+\times (0,T))}\]
%%\[
%\leq c_2T^\frac12 (\|u\|_{{B}^{\alpha,\frac{\al}2}_\infty(\R_+\times (0,T))}+\|v\|_{{B}^{\alpha,\frac{\al}2}_\infty(\R_+\times (0,T))})\|u-v\|_{{B}^{\alpha,\frac{\al}2}_\infty(\R_+\times (0,T))} .
%\]
After iterating this procedure  finite times, we obtain  the conclusion that $u=v$ in $\R_+\times (0,T)$.
%????????????

%\subsection{Regularity}

\appendix
\setcounter{equation}{0}

\section{Proof of Proposition \ref{proheat1} }
\setcounter{equation}{0}
\label{appen2}

%To show the Proposition \ref{proheat1} and Proposition \ref{propheat2}, we need the following lemma.
%\begin{lemm}
%Let $ 0 < \al < 1$. Then
%\[
%\| \Ga_t\|_{\dot B^\al_1 (\R)} \leq c t^{-\frac12 + \frac{\al}2 }.
%\]
%
%
%\end{lemm}
%\begin{proof}
%Note that
%\[
%\| \Ga_t\|_{L^1 (\R)} = 1,\quad \mbox{and }\quad
%\| \Ga_t\|_{\dot H^2_1 (\R)} = \| \De \Ga_t\|_{L^\infty (\R)} \leq c t^{-1}.
%\]
%Using the real interpolation theorem, $(L^1 (\R), \dot H^2_1 (\R))_{\frac{\al}2 ,1 } =\dot B^{\al}_1 (\R)$, we get the lemma.
%
%Let $\Phi$ be function whose Fourier transform is $\hat \Phi(\xi) = \hat\phi(2^{-1} \xi) + \hat \phi(\xi) + \hat \phi(2 \xi)$ such that $ \hat \Phi_k(\xi) \hat \phi_k(\xi) = \phi_k (\xi)$. Then, we have
%\[
%\| \Ga_t\|_{\dot B^\al_1 (\R)}
%&  = \sum_{-\infty < k < \infty} 2^{\al k} \| \Ga_t * \phi_k\|_{L^\infty(\R)}\\
%\]
%
%
%
%\end{proof}

%Recall that
%\[
%\dot{B}^{\al,\frac{\al}2  }_\infty(\R\times (0,T))=L^\infty(0,T;\dot{B}^\al_\infty(\R))\cap L^\infty(\R; \dot{B}^{\frac{\al}2  }_\infty(0,T)).\]

%{\color{red}{

%
%}}
By Young's theorem, we have
\[
\|u(t)\|_{L^\infty(\R)}\leq c\|\Gamma_t\|_{L^1}\|f\|_{L^\infty(\R)}\leq c\|f\|_{L^\infty(\R)},\]  and this gives
the estimate
\begin{equation}
\label{appen.prop1}
\|u\|_{L^\infty(\R\times (0,\infty))}\leq c\|f\|_{L^\infty(\R)}.\end{equation}

 %can be obtained easily by Young's Theorem, and
Since $D_tu=\Delta_x u$, and $D^2_xu=\Gamma_t*D^2_xf$, again, by Young's theorem we have
\[
\|D_tu(t)\|_{L^\infty(\R)}\leq c\|D_x^2u(t)\|_{L^\infty(\R)}\leq c\|\Gamma_t\|_{L^1}\|D^2_xf\|_{L^\infty(\R)}\leq c\|D^2_xf\|_{L^\infty(\R)},\]
and this gives the estimate
\begin{equation}
\label{appen.prop2}
\|u\|_{\dot{W}^{2,1}_\infty(\R\times (0,\infty))}\leq c\|f\|_{\dot{W}^2_\infty(\R)}.\end{equation}

According to the real interpolation theory,
\[
( L^\infty(\R), \dot{W}_\infty^{2}(\R))_{\frac{\al}2  }=\dot{B}^\al_\infty(\R),\]
and
\[
( L^\infty(\R\times (0,\infty)), \dot{W}_\infty^{2,1}(\R\times (0,\infty)))_{\frac{\al}2  }=\dot{B}^{\al,\frac{\al}{2}}_\infty(\R\times (0,\infty))\]
for $ \quad  0<\al<2$.
 Apply real interpolation theory to \eqref{appen.prop1} and \eqref{appen.prop2}, then  we have the estimate \[
 \|u\|_{\dot{B}^{\al,\frac{\al}{2}}_\infty(\R\times (0,\infty))}\leq c\|f\|_{\dot{B}^\al_\infty(\R)}.\]
The argument can be extended to any $\al>0$.

 The last estimate concerning smoothness comes  easily from the  properties of the heat kernel.

\section{Proof of Proposition \ref{propheat2} }
\label{appen3}
\setcounter{equation}{0}

%For
Let us derive the first estimate of the proposition.
By properties of heat kernel, $\Gamma*_{x,t}{f}\in \dot{B}^{\al+2,\frac{\al}{2}+1}_\infty(\R\times {\mathbb R})$ with
\[
\|\Gamma*_{x,t}{f}\|_{ \dot{B}^{\al+2,\frac{\al}{2}+1}_\infty(\R\times {\mathbb R})}\leq c\|{f}\|_{ \dot{B}^{\al,\frac{\al}{2}}_\infty(\R\times {\mathbb R})},\]
where $*_{x,t}$ means convolution in $(x,t)$ variables.

%,  let us first consider the case $k=1$.
If $f|_{t=0}=0$, then there is $\tilde{f}\in \dot{B}^{\al, \frac{\al}{2}}_\infty(\R\times {\mathbb R})$ with ${\rm supp} \, \tilde{f}\subset \R\times (0,2T)$ and $\|\tilde{f}\|_{ \dot{B}^{\al,\frac{\al}{2}}_\infty(\R\times {\mathbb R})}\leq c\|f\|_{ \dot{B}^{\al,\frac{\al}{2}}_\infty(\R\times (0,T))}$.
Note that $u(x,t)=D_x \Ga*_{x,t}\tilde{f}$ for $t> 0$.
%, where $*_{x,t}$ means convolution in $(x,t)$ variables.
%By the well known theory,
Since, $\Gamma*_{x,t}\tilde{f}\in \dot{B}^{\al+2,\frac{\al}{2}+1}_\infty(\R\times {\mathbb R})$ with
\[
\|\Gamma*_{x,t}\tilde{f}\|_{ \dot{B}^{\al+2,\frac{\al}{2}+1}_\infty(\R\times {\mathbb R})}\leq c\|\tilde{f}\|_{ \dot{B}^{\al,\frac{\al}{2}}_\infty(\R\times {\mathbb R})},\]
%to conclude that
we have
\begin{align}
\|u\|_{ \dot{B}^{\al+1,\frac{\al}{2}+\frac{1}{2}}_\infty(\R\times (0,T))}
\leq c\|{f}\|_{ \dot{B}^{\al,\frac{\al}{2}}_\infty(\R\times (0,T))}.
\end{align}

If $f|_{t=0}\neq 0$, then  let $F(s)=f(s)-\Gamma_s*(f|_{t=0})$ and $U=\int^t_0\int_{\R}D_y\Gamma_{t-s}*F(s) dyds$. Then
\begin{align*}
\|U\|_{ \dot{B}^{\al+1,\frac{\al}{2}+\frac{1}{2}}_\infty(\R\times (0,T))}
\leq c\|{F}\|_{ \dot{B}^{\al,\frac{\al}{2}}_\infty(\R\times(0,T))}.
\end{align*}
Note that
$\int^t_0\int_{\R}D_y\Gamma_{t-s}*\Big( \Gamma_s*(f|_{t=0})\Big) ds=t\int_{\R}D_y\Gamma_{t}*(f|_{t=0}) dy,$ and
$D_x\Big(t\int_{\R}D_y\Gamma_{t}*(f|_{t=0}) dy\Big)\sim \int_{\R}\ \Gamma_{t}*(f|_{t=0}) dy.$
By the same reasoning as for the proof of Proposition \ref{proheat1}, we can show that $t\int_{\R}D_y\Gamma_{t}*(f|_{t=0}) dy\in \dot{B}^{\al+1,\frac{\al+1}{2}}_\infty(\R\times(0,T))$, $0<\al$ with
\begin{align*}
\|t\int_{\R}D_y\Gamma_{t}*(f|_{t=0}) dy\|_{ \dot{B}^{\al+1,\frac{\al}{2}+\frac{1}{2}}_\infty(\R\times (0,T))}
\leq c\|f|_{t=0}\|_{ \dot{B}^{\al,\frac{\al}{2}}_\infty(\R)}\leq c\|f\|_{ \dot{B}^{\al,\frac{\al}{2}}_\infty(\R\times(0,T))}.
\end{align*}
Combining the above two estimates we conclude that
\begin{align}
\notag\|u\|_{ \dot{B}^{\al+1,\frac{\al}{2}+\frac{1}{2}}_\infty(\R\times (0,T))}&\leq \|U\|_{ \dot{B}^{\al+1,\frac{\al}{2}+\frac{1}{2}}_\infty(\R\times (0,T))}\\
&+\|t\int_{\R}D_y\Gamma_{t}*(f|_{t=0}) dy\|_{ \dot{B}_\infty^{\al+1,\frac{\al}{2}+\frac{1}{2}}(\R\times (0,T))}
\leq c\|f\|_{ \dot{B}^{\al,\frac{\al}{2}}_\infty(\R\times(0,T))}
\end{align}

It is well known that $D_x\Ga(t)\in {\mathcal H}^1(\R)$, where ${\mathcal H}^1(\R)$ denotes  Hardy space.
Since  $\|D_x \Ga(t)\|_{{\mathcal H}^1(\R)} \leq ct^{-\frac12}$,
we have
\begin{align}
\label{h5}
\|u(t)\|_{L^\infty(\R)} & \leq c \int^t_0\int_{\R}\| D_y \Ga(\cdot-y,t-s)\|_{{\mathcal H}^1(\R)} \|f(s)\|_{BMO(\R)}ds
% \notag & \leq cT^{\frac{1}{2}}\|f\|_{L^\infty(0,T;BMO(\R))}\\
\leq cT^{\frac{1}{2}}\|f\|_{L^\infty( (0,T);BMO(\R))}.
\end{align}
This gives the third estimate of the proposition.

Finally, we will derive the second estimate of the proposition.
Since $ D^2_xu=\int^t_0D_y\Gamma_{t-s}* D^2_y f(s) ds,$
by Young's Theorem we have
\begin{align*}
\|D_x^2u(t)\|_{L^\infty(\R)} \leq c \int^t_0\| D_y \Ga(\cdot-y,t-s)\|_{L^1(\R)} \|D_y^2f(s)\|_{L^\infty(\R)}ds
% \notag & \leq cT^{\frac{1}{2}}\|f\|_{L^\infty(0,T;BMO(\R))}\\
\leq cT^{\frac{1}{2}}\|D^2_yf\|_{L^\infty(\R\times (0,T))}.
\end{align*}
Since
$D_tu=\int^t_0D_y\Gamma(t-s)*_xD_tf(s) ds,$
\begin{align*}
\|D_tu(t)\|_{L^\infty(\R)}  \leq c \int^t_0\| D_y \Ga(\cdot-y,t-s)\|_{L^1(\R)} \|D_tf(s)\|_{L^\infty(\R)}ds
% \notag & \leq cT^{\frac{1}{2}}\|f\|_{L^\infty(0,T;BMO(\R))}\\
\leq cT^{\frac{1}{2}}\|D_tf\|_{L^\infty(\R\times (0,T))}.
\end{align*}
This gives the estimate
\begin{align}
\label{h55}
\|u\|_{\dot{W}^{2,1}_\infty(\R\times (0,T))}
% \notag & \leq cT^{\frac{1}{2}}\|f\|_{L^\infty(0,T;BMO(\R))}\\
\leq cT^{\frac{1}{2}}\|f\|_{\dot W^{2,1}_\infty(\R\times (0,T))}.
\end{align}
Apply real interpolation theory to \eqref{h5} and \eqref{h55}, we have the estimate
\begin{align*}
\|u\|_{\dot{B}^{\al,\frac{\al}{2}}_\infty(\R\times (0,T))}
% \notag & \leq cT^{\frac{1}{2}}\|f\|_{L^\infty(0,T;BMO(\R))}\\
\leq cT^{\frac{1}{2}}\|f\|_{\dot B_\infty^{\al,\frac{\al}{2}}(\R\times (0,T))}, \ 0<\al<2.
\end{align*}
The argument can be extended to any $\al>0$.

\section{Proof of Proposition \ref{propriesz}.}
\label{appen-riesz}
The first   estimates in Proposition \ref{propriesz} are  well known properties of the singular integral operator(see  \cite{giga}, \cite{sawada}  and  \cite{St}). Hence we have only to prove the second  estimates.
By the similar argument as  in  \cite{giga}, it holds \[
\|Rf\|_{\dot{B}^{\al,\frac{\al}{2}}_\infty(\R\times {\mathbb R})}\leq c\|f\|_{\dot{B}^{\al,\frac{\al}{2}}_\infty(\R\times {\mathbb R})},\ \al\in {\mathbb R}, \mbox{ for any }f\in \dot{B}^{\al,\frac{\al}{2}}_\infty(\R\times {\mathbb R}) .
\]
If $f\in \dot{B}^{\al,\frac{\al}{2}}_\infty(\R\times (0,T))$ with $f|_{t=0}=0$, then there is $\tilde{f}\in \dot{B}^{\al,\frac{\al}{2}}_\infty(\R\times {\mathbb R})$ extension of $f$ with
\[
\|\tilde{f}\|_{\dot{B}^{\al,\frac{\al}{2}}_\infty(\R\times {\mathbb R})}\leq c\|f\|_{\dot{B}^{\al,\frac{\al}{2}}_\infty(\R\times (0,T))},\]
hence
 \[
\|Rf\|_{\dot{B}^{\al,\frac{\al}{2}}_\infty(\R\times (0,T))}\leq c\|f\|_{\dot{B}^{\al,\frac{\al}{2}}_\infty(\R\times (0,T))}\ \al\in {\mathbb R},\mbox{ for any }f\in \dot{B}^{\al,\frac{\al}{2}}_\infty(\R\times (0,T))\mbox{ with }f|_{t=0}=0.
\]

Now let us consider $f\in\dot{B}^{\al,\frac{\al}{2}}_\infty(\R\times (0,T))$ with $f|_{t=0}\neq 0$.
Let $F=f-\Ga_t*(f|_{t=0})$, then $F|_{t=0}=0$. Hence
 \[
\|RF\|_{\dot{B}^{\al,\frac{\al}{2}}_\infty(\R\times (0,T))}\leq c\|F\|_{\dot{B}^{\al,\frac{\al}{2}}_\infty(\R\times (0,T))},\ \al\in {\mathbb R}.
\]
Note that
$R\Big(\Ga_t*(f|_{t=0})\Big)=\Ga_t*\Big(R(f|_{t=0})\Big)$, and
\[
\|\Ga_t*\Big(R(f|_{t=0})\Big)\|_{\dot{B}^{\al,\frac{\al}{2}}_\infty(\R\times (0,T))}\leq c\|R(f|_{t=0})\|_{\dot{B}^\al_\infty(\R)}\leq c\|f|_{t=0}\|_{\dot{B}^\al_\infty(\R)}
\leq  c\|f\|_{\dot{B}^{\al,\frac{\al}{2}}_\infty(\R\times (0,T))}.\]
Here the first inequality and the last inequality hold for $0<\al$.
Therefore we conclude that
 \[
\|Rf\|_{\dot{B}^{\al,\frac{\al}{2}}_\infty(\R\times (0,T))}\leq c\|f\|_{\dot{B}^{\al,\frac{\al}{2}}_\infty(\R\times (0,T))}\mbox{ for any }f\in \dot{B}^{\al,\frac{\al}{2}}_\infty(\R\times (0,T)),\ \al>0 .
\]
%Finally, recall that $\dot{B}^{\al,\frac{\al}{2}}_\infty(\R\times (0,T))=\dot{C}^{\al,\frac{\al}{2}}(\R\times (0,T))$ for $0<\al<1$.
%By the similar computations in The first estimate holds
%\begin{equation}
%\label{h10}
%\|u\|_{L^\infty(0,T;\dot{B}^{\al}_\infty(\R))}\leq c\|f\|_{L^\infty(0,T;\dot{B}^{\al}_\infty(\R))}.
%\end{equation}
%Now we will estimate $\|u\|_{L^\infty(\R; \dot{B}^{\frac{\al}2  }_\infty(0,T))}$.
%Note that
%\[
%%\label{h11}
%\|u\|_{L^\infty(\R;\dot{B}^{\frac{\al}2  }_\infty(0,T)}  =\sup_{\R}\sup_{t\neq s>0}\frac{|u(x,t)-u(x,\tau)|}{|t-\tau|^{\frac{\al}{2}}}.
%%=\sup_{t\neq s>0}\frac{\|u(t)-u(\tau)\|_{L^\infty(\R)}}{|t-\tau|^{\frac{\al}{2}}}.
%\]
%Since
%$u(x,t)-u(x,\tau)=R(f(t)-f(\tau))$, by the first estimate
%\[
%??????????
%\]

\section{Proof of proposition \ref{proppoisson2}.}

\label{appen-poisson}

The first estimate of Proposition \ref{proppoisson2}  is  well known property of Poisson operator (see \cite{St}).
Hence we have only to prove the second two estimates.
%Again, recall that $
%\dot{B}_\infty^{\al,\frac{\al}2  }(\R_+\times (0,T))=L^\infty((0,\infty);\dot{B}^\al_\infty(\R_+))\cap L^\infty(\R_+;\dot{B}^{\frac{\al}2  }_\infty(0,T).$
By the first estimate %and Proposition \ref{propriesz1},
%\[
%\|u(t)\|_{\dot{B}^\al_\infty(\R_+)}\leq c\|f(t)\|_{\dot{B}^{\al}_\infty(\Rn)}.
%\]
%This implies that
\begin{equation}
\label{appen.prop3}
\|Pf\|_{L^\infty(\R_+\times (0,T))}\leq c\|f\|_{L^\infty(\Rn\times (0,T))}
.\end{equation}
Since $D^2_{x'}Pf(t)=P(D^2_{x'}f)$, $D_{x_n}^2Pf=-\Delta_{x'}Pf$ and $D_tPf=P(D_tf)$, we have
\begin{align*}
\|D_{x_n}^2Pf\|_{L^\infty(\R_+\times (0,T))}\leq \|D_{x'}^2Pf\|_{L^\infty(\R_+\times (0,T))}\leq c\|D_{x'}f\|_{L^\infty(\Rn\times (0,T))}
,\\
\|D_tPf\|_{L^\infty(\R_+\times (0,T))}\leq c\|D_tf\|_{L^\infty(\Rn\times (0,T))}.
\end{align*}
This gives the estimate
\begin{equation}
\label{appen.prop4}
\|Pf\|_{\dot{W}^{2,1}_\infty(\R\times (0,T))}\leq c\|f\|_{\dot{W}^{2,1}_\infty(\Rn\times (0,T))}.\end{equation}
%On the other hand, observe that
%\[
%%\|u\|_{L^\infty(\R_+;\dot{C}^{\frac{\al}2  }(0,T)}  =
%\sup_{ x\in\R}\sup_{0<t\neq s<T}\frac{|u(x,t)-u(x,s)|}{|s-t|^{\frac{\al}{2}}} %\]\[
% =\sup_{0<t\neq s<T}\frac{\|u(t)-u(s)\|_{L^\infty(\R)}}{|s-t|^{\frac{\al}{2}}}.
%\]
%Again by the first estimate
%\[
%\|u(t)-u(s)\|_{L^\infty(\R_+)}\leq c\|f(t)-f(s)\|_{L^\infty(\Rn)}. \]
%Hence,\[
%\|u\|_{L^\infty(\R_+;\dot{C}^{\frac{\al}2  }(0,T))}\leq  \sup_{0<t\neq s<T}\frac{\|f(t)-f(s)\|_{L^\infty(\Rn)}}{|s-t|^{\frac{\al }{2}}}.\]
%Note that
%\[
%\sup_{0<t\neq s<T}\frac{\|f(t)-f(s)\|_{L^\infty(\Rn)}}{|s-t|^{\frac{\al }{2}}} =\sup_{x'\in \Rn}\sup_{0<t\neq s<T}\frac{|f(x',t)-f(x',s)|}{|s-t|^{\frac{\al }{2}}}
%=\|f\|_{L^\infty(\Rn;\dot{C}^{\frac{\al}{2}}(0,T))}.
%\]
Apply real interpolation theory to \eqref{appen.prop3} and \eqref{appen.prop4}, we have the estimate
\begin{equation*}
\|Pf\|_{\dot{B}^{\al,\frac{\al}{2}}_\infty(\R\times (0,T))}\leq c\|f\|_{\dot{B}^{\al,\frac{\al}{2}}_\infty(\Rn\times (0,T))},  \ 0<\al<2.\end{equation*}
The argument can  be extended to any $\al>0.$

The last estimate concerning smoothness comes  easily from the properties of the Poisson kernel.

\section*{Acknowledgements}
This research was supported by Basic Science Research Program through the National Research Foundation of Korea(NRF) funded by the Ministry of Education(2014R1A1A3A04049515).

\end{document}